\pgfplotsset{compat=newest} 
\pgfplotsset{plot coordinates/math parser=false} 
\newlength\fwidth
\newlength\fheight
\newtheorem{theorem}{Theorem}
\theoremstyle{definition}
\newtheorem{remark}{Remark}
\newtheorem{example}{Example}
\newtheorem{assumption}{Assumption}
\newcommand{\Real}{\mathbb{R}}
\newcommand{\bzero}{\mathbf{0}}
\newcommand{\cH}{\mathcal{H}}
\newcommand{\dH}{ H_{\rm d}}
\newcommand{\dx}{\Delta x}
\newcommand{\y}{\mathbf y}
\newcommand{\q}{\mathbf q}
\newcommand{\p}{\mathbf p}
\newcommand{\qhat}{\hat{\mathbf q}}
\newcommand{\phat}{\hat{\mathbf p}}
\newcommand{\Y}{\mathbf Y}
\newcommand{\Q}{\mathbf Q}
\newcommand{\Qhat}{\hat{\mathbf Q}}
\newcommand{\Phat}{\hat{\mathbf P}}
\newcommand{\Dhatq}{\hat{\mathbf D}_{\q}}
\newcommand{\Dhatp}{\hat{\mathbf D}_{\p}}
\newcommand{\Pp}{\mathbf P}
\newcommand{\V}{\mathbf V}
\renewcommand{\v}{\mathbf v}
\newcommand{\Vp}{\mathbf V_{\p}}
\newcommand{\Vq}{\mathbf V_{\q}}
\newcommand{\Hr}{\hat{ H}}
\newcommand{\Fq}{\mathbf F_{\q}}
\newcommand{\Fp}{\mathbf F_{\p}}
\newcommand{\Fhatq}{\hat{\mathbf F}_{\q}}
\newcommand{\Fhatp}{\hat{\mathbf F}_{\p}}
\newcommand{\In}{\mathbf I_n}
\newcommand{\fq}{\mathbf f_{\q}}
\newcommand{\fp}{\mathbf f_{\p}}
\newcommand{\Jn}{\mathbf J_{2n}}
\newcommand{\Jr}{\mathbf J_{2r}}
\newcommand{\dt}{\Delta t}
\newcommand{\Vt}{\V^{+}}
\title{Hamiltonian Operator Inference:\\ Physics-preserving Learning of Reduced-order Models \\ for Canonical Hamiltonian Systems}
\author{Harsh Sharma\thanks{Corresponding Author, hasharma@ucsd.edu} \thanks{Postdoctoral Scholar, Department of Mechanical and Aerospace Engineering, University of California San Diego}, Zhu Wang\thanks{Associate Professor, Department of Mathematics, University of South Carolina}  \ and Boris Kramer\thanks{Assistant Professor, Department of Mechanical and Aerospace Engineering, University of California San Diego}}
\begin{document}
\maketitle

\begin{abstract} 
This work presents a nonintrusive physics-preserving method to learn reduced-order models (ROMs) of canonical Hamiltonian systems. Traditional intrusive projection-based model reduction approaches utilize symplectic Galerkin projection to construct Hamiltonian ROMs by projecting  Hamilton's equations of the full model onto a symplectic subspace. This symplectic projection requires complete knowledge about the full model operators and full access to manipulate the computer code. In contrast, the proposed Hamiltonian operator inference approach embeds the physics into the operator inference framework to develop a data-driven model reduction method that preserves the underlying symplectic structure. Our method exploits knowledge of the Hamiltonian functional to define and parametrize a Hamiltonian ROM form which can then be learned from data projected via symplectic projectors. The proposed method is gray-box in that it utilizes knowledge of the Hamiltonian structure at the partial differential equation level, as well as knowledge of spatially local components in the system. However, it does not require access to computer code, only data to learn the models.  Our numerical results demonstrate Hamiltonian operator inference on a linear wave equation, the cubic nonlinear Schr\"{o}dinger equation, and a nonpolynomial sine-Gordon equation. Accurate long-time predictions far outside the training time interval for nonlinear examples illustrate the generalizability of our learned models. \\ \\
\textbf{Keywords:} Structure-preserving model reduction;  Hamiltonian systems; Physics-informed machine learning; Data-driven modeling; Operator inference.

\end{abstract}
%
%
%
%
\section{Introduction}
\label{s:1}
Hamiltonian partial differential equations (PDEs) arise as models in many science and engineering applications such as the elasticity equations in elastodynamics, the Maxwell-Vlasov equations in plasma physics, the shallow-water equations in climate modeling, and the Kuramoto–Sivashinsky equation in chemical reaction dynamics, see, e.g., \cite{marsden2013introduction}. The governing equations in Hamiltonian systems possess physical, mechanical and mathematical structures in the form of symmetries, symplecticity, Casimirs, and energy conservation. The conservative nature and the underlying symplectic structure of Hamiltonian systems are considered fundamental to their discretization and numerical treatment.
\par
In the last three decades, the field of geometric numerical integration has produced a variety of numerical methods for simulating physical systems described by Hamiltonian ordinary differential equations (ODEs), which respect the qualitative features of the dynamical system. These structure-preserving ideas have also been extended to Hamiltonian PDEs. An overview of the field of structure-preserving methods can be found in, e.g. \cite{bridges2006numerical, sharma2020review}. For a thorough exposition, the interested reader may consult the standard textbooks \cite{hairer2006geometric,leimkuhler_reich_2005} and the references cited herein. Since many applications of Hamiltonian systems involve long-time numerical simulations of large-scale systems, reduced-order models (ROMs) can be employed to obtain surrogate models that can be integrated in time at much lower computational cost. The qualitative properties of the surrogate model are critical to the accuracy of the numerical simulation and reliability of long-time predictions.
\par
Among the many model reduction approaches, proper orthogonal decomposition (POD) with Galerkin projection \cite{sirovich1987turbulence,berkooz1993proper,lumley1967structure,holmes2012turbulence} has proven beneficial in a variety of science and engineering applications. In projection-based model reduction, the governing equations are projected onto a low-dimensional subspace spanned by POD basis vectors. Classical projection-based model reduction approaches are designed to be minimal-error reduced-order models (ROMs). These ROMs often violate the underlying geometric structure which leads to unphysical numerical predictions, see \cite{peng2016symplectic}. Therefore, when the original system possesses specific qualitative features, it is preferable to construct a ROM that retains those features. The symplectic model reduction of Hamiltonian systems was introduced in \cite{peng2016symplectic}, where the Galerkin projection-based ROM was modified so that the ROM retains the underlying symplectic structure. Building on this work, the symplectic model reduction approach was combined with nonorthonormal bases in \cite{buchfink2019symplectic}. A similar structure-preserving approach with shifted snapshots was presented in \cite{gong2017structure} to improve the Hamiltonian approximation. The work in \cite{afkham2017structure} presented a reduced basis method approach for structure-preserving model reduction of parametric Hamiltonian systems. The reduced basis method has been extended to Hamiltonian systems with a more general Poisson structure in \cite{hesthaven2021structure}. A dynamical reduced basis method has been presented in \cite{pagliantini2021dynamical} for Hamiltonian systems with local low-rank structure. The idea of structure-preservation is explored at the variational formulation level in \cite{egger2021energy} to deduce important properties about POD-based model reduction of Hamiltonian systems. The above methods laid the foundation for structure-preserving model reduction for Hamiltonian systems, but they do require full access to the computer model, which is often not possible or feasible when working with proprietary, or very complex computer code. Data-driven (a.k.a nonintrusive) reduced modeling methods do not require such access, and are therefore an attractive alternative. 
\par 
For Hamiltonian systems, a variety of structure-preserving data-driven approaches have been developed recently, e.g., Hamiltonian neural networks \cite{greydanus2019hamiltonian}, symplectic networks \cite{jin2020sympnets}, Gaussian processes \cite{bertalan2019learning}, Bayesian system identification \cite{galioto2020bayesian}, and orthogonal polynomials \cite{wu2020structure}. The majority of these approaches are only concerned with learning Hamiltonian systems when the data is coming from very low-dimensional systems, i.e. 3-4 dimensions. This inability to learn from high-dimensional data limits their use for learning models from data of large-scale systems such as semi-discretized PDEs. On the other hand, the combination of data reduction and model reduction---termed data-driven reduced-order modeling---is a feasible approach for this setting. 
\par
For linear systems, a variety of successful data-driven model reduction approaches have been developed, e.g. the Loewner framework \cite{ionita2014data}, eigensystem realization \cite{kung1978new,kramer2016tangential,ma2011reduced}, vector fitting \cite{drmac2015quadrature}, but methods for learning ROMs for nonlinear systems in a nonintrusive way is still a burgeoning research area. For nonlinear systems, nonintrusive model reduction generally involves choosing a particular parametrization of the nonlinear terms. The Loewner approach has been extended to bilinear and quadratic-bilinear systems in \cite{antoulas2016model,gosea2018data}. Dynamic mode decomposition (DMD) has also been used for learning linear ROMs for nonlinear systems in \cite{rowley2009spectral,schmid2010dynamic}.  It is worth mentioning that sparsity-promoting regression techniques have been used in \cite{brunton2016discovering,rudy2017data, schaeffer2018extracting} for data-driven discovery of governing equations from a dictionary of nonlinear candidate functions. However, these sparse approximation approaches are not used for reducing the dimension of large-scale systems. 
\par 
Operator inference for nonintrusive model reduction was introduced in \cite{peherstorfer2016data} and applied to full-order models (FOMs) that are linear or have low-order polynomial nonlinear terms. Using lifting transformations, the operator inference framework has been extended to general nonlinear systems in \cite{SKHW2020_learning_ROMs_combustor,qian2019transform,QKPW2020_lift_and_learn}. The approach has also been extended to a gray-box setting in \cite{benner2020operator} where analytical expressions for the nonpolynomial nonlinear terms are known and the remaining operators are learned via operator inference. Convergence and accuracy certificates were developed in \cite{peherstorfer2020sampling,uy2021operator}.
\par
Our goal is to efficiently and stably learn Hamiltonian reduced-order models from high-dimensional data.  We approach this problem by proposing the nonintrusive \textit{Hamiltonian operator inference} (H-OpInf), a structure-preserving data-driven model reduction method that preserves the underlying symplectic structure inherent to Hamiltonian systems. The method can work with high-dimensional state-trajectory data from a Hamiltonian system. We project this data onto a low-dimensional basis via symplectic projection, and learn the reduced Hamiltonian operators from the reduced data using a constrained least-squares operator inference procedure that ensures that the models \textit{preserve} the Hamiltonian nature of the problem. 
\par 
The remainder of the paper is organized as follows. Section~\ref{s:2} reviews the basics of Hamiltonian PDEs and describes intrusive structure-preserving model reduction. Section~\ref{s:3}  presents the proposed structure-preserving operator inference problem for Hamiltonian systems with nonpolynomial nonlinearities. In Section~\ref{s:4} we apply our proposed method to three Hamiltonian systems with increasing complexity: the linear wave equation, the cubic Schr\"{o}dinger equation and the sine-Gordon equation. Our numerical results demonstrate the learned models' interpretability and ability to provide accurate long-time prediction beyond the training data. Finally, in Section~\ref{s:5} we provide concluding remarks and future research directions.
%
\section{Background}
\label{s:2}
In this section, we introduce Hamitonian PDE models and describe intrusive projection-based model reduction for Hamiltonian systems. This provides the necessary background for our nonintrusive method in Section~\ref{s:33}. In Section \ref{s:21} we first review the basics of Hamiltonian PDEs by deriving the governing PDEs, followed by their structure-preserving space discretization. After deriving the FOM equations, we closely follow \cite{peng2016symplectic} to derive projection-based Hamiltonian ROMs via symplectic projection in Section \ref{s:22}. 
%
\subsection{Hamiltonian Systems}
\label{s:21} 
We consider a general infinite-dimensional Hamiltonian system described by the following evolutionary PDE 
\begin{equation}
\frac{\partial  y(x,t)}{\partial t} = \mathcal{S} \frac{\delta \mathcal{H}}{\delta y},
\label{eq:HPDE}
\end{equation}
where $x$ is the spatial variable, $t$ is time, $\mathcal{S}$ is a skew-symmetric operator,  $\frac{\delta \mathcal{H}}{\delta y}$ is the variational derivative\footnote{The variational derivative of $\mathcal{H}$ is defined through $\frac{d}{d\epsilon}\mathcal{H}[y+ \epsilon v]|_{\epsilon=0}=\bigg\langle \frac{\delta \mathcal{H}}{\delta y},v\bigg\rangle$ where $v$ is an arbitrary function.} of $\mathcal{H}$, and we consider Hamiltonian functional $\mathcal{H}$ defined by 
\begin{equation}
\cH[y]=\int \left(H_{\text{quad}}(y, y_x,\cdots) + H_{\text{nl}}(y) \right) \ \text{d}x,
\label{eq:Hc}
\end{equation}
where $y_x=\frac{\partial y}{\partial x}$ is the partial derivative of $y$ with respect to $x$, $H_{\text{quad}}(y,y_x,\cdots)$ contains quadratic terms and $H_{\text{nl}}(y)$ contains spatially local nonlinear terms of the Hamiltonian functional. 
\begin{remark}
Although the partition of the integrand in \eqref{eq:Hc} (quadratic terms in $H_{\text{quad}}$ as a function of the spatial derivatives, and spatially local nonlinear terms in $H_{\text{nl}}$ as a function of state variables) seems very restrictive, the Hamiltonian functional form covers most (if not all) of the Hamiltonian PDEs found in science and engineering applications, see \cite{bridges2006numerical,celledoni2012preserving}. Importantly, a quadratic component exists in almost all Hamiltonian systems.
\end{remark}

Hamiltonian PDEs possess important geometric properties and their numerical simulation consists of two steps: 1) structure-preserving space discretization that reduces the Hamiltonian PDE to a system of Hamiltonian ODEs; 2) structure-preserving time integration of the finite-dimensional Hamiltonian ODE. We will briefly discuss those in the following.
%
%
\subsubsection{Space Discretization of Hamiltonian PDEs}
\label{s:211}
Space-discretized Hamiltonian FOMs are usually derived from the PDE by finite difference or pseudo-spectral methods. The most popular approach to obtain a Hamiltonian FOM from the infinite-dimensional Hamiltonian system \eqref{eq:HPDE} is to discretize the space-time continuous Hamiltonian functional \eqref{eq:Hc} directly. The key steps in this approach are discussed in \cite{bridges2006numerical,celledoni2012preserving}. The resulting finite-dimensional Hamiltonian model can be described by 
\begin{equation}
\dot{\y}=\mathbf{S}_{\rm d} \nabla_{\y}\dH(\y),
\end{equation}
where $\mathbf{S}_{\rm d}=-\mathbf{S}_{\rm d}^\top$ and $\dH$ is the space-discretized Hamiltonian function. 
\par
For this work, we will focus on canonical Hamiltonian systems, i.e. $\mathbf S_{\rm d}=\Jn=\begin{bmatrix}
       \bzero & \In \\
       -\In & \bzero
     \end{bmatrix}$ where $\In$ is the $n \times n$ identity matrix. For canonical systems, the state vector $\y \in \Real^{2n}$ can be partitioned as $\y=[\q^\top,\p^\top]^\top$ where $\q,\p \in \Real^n$. Both $\q$ and $\p$ have distinct physical interpretations, and their relation to each other induces the canonical symplectic structure. The governing equations for the semi-discrete canonical Hamiltonian systems are 
      \begin{equation}
    \dot{\y}=\begin{bmatrix}
       \dot{\q} \\
       \dot{\p}
     \end{bmatrix}=\Jn \nabla_{ \y }\dH(\q,\p)
    = \begin{bmatrix}
       \nabla_{ \p }\dH(\q,\p) \\
       - \nabla_{ \q }\dH(\q,\p)
     \end{bmatrix}.
     \label{eq:HFOM}
\end{equation}
In addition to retaining the Hamiltonian character of the given PDE, the structure-preserving space discretizations often introduce additional mathematical structure in the FOM operators, see \cite{li2017spectral}. To illustrate this, we present a simple example of a Hamiltonian PDE.

\begin{example}
\label{ex}
Consider the one-dimensional nonlinear wave equation with wave speed $c$ which has the Hamiltonian
\begin{equation}
\cH[q,p]=\int \left( \frac{1}{2}p^2 + \frac{c^2}{2}q_x^2   + H_{\text{nl}}(q,p) \right).
\end{equation}
Direct discretization of the Hamiltonian functional with $n$ equally spaced grid points leads to the following space-discretized Hamiltonian 
\begin{equation}
\dH(\q,\p)=\sum_{i=1}^n \left[\frac{1}{2}p_i^2 + \frac{c^2}{2}\left(\sum_{j=1}^nM_{ij}q_j \right)^2 + H_{\text{nl}}(q_i,p_i) \right],
 \end{equation}
 where $q_i:=q(t,x_i)$, $p_i:=p(t,x_i)$, and the derivative of $q$ with respect to $x$ is approximated by an appropriate differentiation matrix $\mathbf{M}=\left(M_{ij} \right)_{i,j=1}^n$, i.e., $ q_x(x_i) \approx \sum_{j=1}^nM_{ij}q_j$. For $\dx \to 0$ with $n\dx=\ell$, the term $\dH(\q,\p)\dx$ converges to the space-time continuous Hamiltonian functional $\cH$. The governing FOM equations for the Hamiltonian system are 
\begin{equation}
\begin{bmatrix}
       \dot{\q} \\
       \dot{\p}
     \end{bmatrix}
= \begin{bmatrix}
      \bzero & \In \\
      c^2\mathbf{D} & \bzero
     \end{bmatrix} \begin{bmatrix}
       \q \\
       \p
     \end{bmatrix} + \begin{bmatrix}
       \nabla_{ \p }H_{\text{nl}}(\q,\p) \\
       - \nabla_{ \q }H_{\text{nl}}(\q,\p)
     \end{bmatrix}, 
\end{equation}
and regardless of the spatial derivative approximation, the linear FOM operators are always symmetric, i.e., $\mathbf{D}=\mathbf{M}^\top\mathbf{M}$.
\end{example}

\subsubsection{Time Integration of Semi-discrete Hamiltonian Systems}
Once a Hamiltonian FOM has been formulated, a structure-preserving method can be applied in time to complete the structure-preserving discretization in space and time. The flow map for Hamiltonian FOMs \eqref{eq:HFOM} preserves the canonical symplectic form and conserves the system Hamiltonian $\dH$, i.e., $\dH(\q(0),\p(0))=\dH(\q(t),\p(t))$ for all $t$. The field of geometric numerical integration methods has shown that it is advantageous to use time integrators that preserve these two geometric features. In fact, time integrators that do not respect the underlying geometric structure lead to unphysical numerical results. The work in \cite{zhong1988lie} showed that a numerical integrator with a fixed time step cannot preserve the symplectic form and conserve the energy simultaneously for general Hamiltonian systems. Based on this result, structure-preserving time integrators for canonical Hamiltonian systems can be divided into two categories: (i) energy-preserving integrators and (ii) symplectic integrators. Both approaches have their own advantages and the preferred geometric numerical integration method depends on the Hamiltonian system. Energy-preserving integrators guarantee that the numerical solution is restricted to a codimension 1 submanifold of the configuration manifold whereas symplectic integrators ensure a more global and multi-dimensional behavior through symplectic structure preservation.  
%
%
\subsection{Intrusive Structure-preserving Model Reduction}
\label{s:22} 
In projection-based model reduction, the semi-discrete model is projected onto a low-dimensional subspace. The key idea in structure-preserving model reduction is to preserve the underlying geometric structure during the projection. Since the FOM is a Hamiltonian system with underlying symplectic structure, the projection step is treated as the symplectic inverse of a symplectic lift from the low-dimensional subspace to the state space, see \cite{peng2016symplectic}. A \textit{symplectic lift} is defined by $\y=\V\tilde{\y}$ where $\V \in \Real^{2n \times 2r}$ is a \textit{symplectic matrix}, i.e., a matrix that satisfies 
\begin{equation}
\V^\top\Jn\V=\Jr.
\end{equation}
The \textit{symplectic inverse} $\Vt$ of a symplectic matrix $\V$ is defined by 
\begin{equation}
\Vt=\Jr^\top\V^\top\Jn,
\end{equation}
and the \textit{symplectic projection} can be written as $\tilde{\y}=\Vt\y$. The time evolution of the reduced state $\dot{\tilde{\y}}$ is given by
\begin{equation}
\dot{\tilde{\y}}=\Vt\dot{\y}=\Vt\Jn \nabla_{ \y }\dH(\q,\p)=\Jr\V^\top\nabla_{\y}\dH(\y)=\Jr\nabla_{\tilde{\y}}\dH(\V\tilde{\y})
\end{equation}
where we have used the chain rule $\nabla_{\tilde{\y}}\dH(\V\tilde{\y})=\V^\top\nabla_{\y}\dH(\y)$ in the last step. The symplectic Galerkin projection of a $2n$-dimensional Hamiltonian system \eqref{eq:HFOM} is given by a $2r$-dimensional $(r \leq n)$ system 
\begin{equation}
\dot{\tilde{\y}}=\Jr\nabla_{\tilde{\y}}\tilde{H}(\tilde{\y}),
\end{equation}
where $\tilde{\y}$ is the reduced state vector with the reduced Hamiltonian $\tilde H(\tilde{\y}):=\dH (\V\tilde{\y})$. While the symplectic Galerkin projection approach yields reduced systems that retain the Hamiltonian nature, the reduced Hamiltonian $\tilde H$, through its definition in terms of FOM Hamiltonian $\dH$, requires access to FOM operators. 
\par
Proper symplectic decomposition (PSD) is a method to find a symplectic projection matrix $\V$ that simultaneously minimizes the projection error in a least-squares sense, i.e., 
\begin{equation}
\min_{\substack{\V \\ \text{s.t.} \ \V^\top\Jn\V=\Jr}} || \mathbf{Y} - \V\Vt \mathbf{Y} ||_F.
\label{eq:PSD}
\end{equation}
where $\mathbf{Y}:=[\y(t_1), \cdots, \y(t_K)] \in \Real^{2n \times K}$ is the snapshot data matrix, and $||\cdot ||_F$ is the Frobenius norm. Since solving \eqref{eq:PSD} to obtain the symplectic basis matrix $\V$ is computationally expensive, we briefly  outline three efficient algorithms, first presented in \cite{peng2016symplectic}, for finding approximated optimal solution for the symplectic matrix $\V$. These algorithms search for a near-optimal solution over different subsets of $Sp(2r,\Real^{2n})$ where $Sp(2r,\Real^{2n})$ is the set of all $2n \times 2r$ symplectic matrices. 
\begin{enumerate}
\item \textbf{Cotangent lift:} This algorithm computes SVD of the extended snapshot matrix \\$\mathbf{Y}_1:=[\q(t_1), \cdots,\q(t_K), \p(t_1), \cdots,\p(t_K)]$  to obtain a POD basis matrix $\mathbf{\Phi} \in \Real^{n \times r}$ and then constructs the symplectic basis matrix $\V_1= \begin{bmatrix}
      \mathbf{\Phi} & \bzero \\
     \bzero & \mathbf{\Phi}
     \end{bmatrix}$ with $\Vt_1=\V_1^\top$. The diagonal nature of $\V_1$ ensures that the interpretability of $\q$ and $\p$ is retained in the reduced setting. 
\item \textbf{Complex SVD:} This algorithm describes the solution in the phase space by $\q(t_i) + i\p(t_i)$ to build a complex snapshot matrix $\mathbf{Y}_2:=[\q(t_1) + i\p(t_1), \cdots,\q(t_K)+ i\p(t_K)]$ and then computes the complex SVD of $\mathbf{Y}_2$ to obtain a basis matrix $\mathbf{\Phi} + i\mathbf{\Psi} \in \mathbb{C}^{n \times r}$.  The symplectic basis matrix is $\V_2= \begin{bmatrix}
      \mathbf{\Phi} & -\mathbf{\Psi} \\
     \mathbf{\Psi} & \mathbf{\Phi}
     \end{bmatrix}$ with $\Vt_2=\V_2^\top$. Due to the nonzero $\mathbf{\Psi}$ matrices on the off-diagonals in $\V_2$, the complex SVD algorithm loses the distinction between $\q$ and $\p$ in the symplectic projection step.
\item \textbf{Nonlinear programming:} This approach starts with an intermediate symplectic matrix $\V_{\text{int}} \in \Real^{2n \times 2k}$ with $k>r$ obtained by cotangent lift or complex SVD and assumes that the optimal solution $\V_3$ is a linear transformation of $\V_{\text{int}}$. This assumption simplifies the original nonlinear programming problem to a smaller nonlinear programming problem
\begin{equation}
\min_{\substack{\mathbf{C} \\ s.t. \ \mathbf{C}^\top\mathbf{J}_{2k}\mathbf{C}=\Jr}} || \mathbf{Y} - \V_{\text{int}}\mathbf{C}\mathbf{C}^+\V_{\text{int}}^+\mathbf{Y} ||_F,
\end{equation}
to obtain  $\mathbf{C}$. The symplectic basis matrix is $\V_3=\V_{\text{int}} \cdot \mathbf{C}$. In addition to being computationally expensive, the nonlinear programming approach also loses the physical meaning of the states $\q$ and $\p$ in the reduced setting.
\end{enumerate}
Since all three PSD approaches restrict to a specific subset of symplectic basis matrices, $Sp(2r,\Real^{2n})$, the resulting basis matrix might be globally suboptimal. A new technique based on an SVD-like decomposition to derive a non-orthogonal symplectic basis was presented in \cite{buchfink2019symplectic}. However, the optimality with respect to the PSD projection error \eqref{eq:PSD} of this non-orthonormal symplectic basis is still an open question. More recently, \cite{gao2021riemannian} developed Riemannian optimization methods for optimization problems with symplectic constraints, and these methods have been proven to converge globally to critical points of the objective function.
%
%
\section{Hamiltonian Operator Inference}
\label{s:3}
In this section, we propose H-OpInf, a Hamiltonian operator inference framework for canonical Hamiltonian PDEs with general nonpolynomial nonlinearities as in \eqref{eq:Hc}. In Section \ref{s:31} we first motivate the need for H-OpInf by demonstrating how the standard operator inference from \cite{peherstorfer2016data} fails to preserve the underlying symplectic structure. Based on the observations from this motivating example, we present H-OpInf in Section \ref{s:32}, a nonintrusive physics-preserving method to learn ROMs of Hamiltonian systems. We then show in Section \ref{s:33} that under certain conditions, the learned operators from nonintrusive H-OpInf converge to their intrusive projection-based counterparts. We also discuss the overall computational procedure of H-OpInf in Section \ref{s:34}.
%
\subsection{Motivation}
\label{s:31} 
We revisit the wave equation example from  Example \ref{ex} with $H_{\text{nl}}=0$, i.e., the linear wave equation 
\begin{equation}
    \frac{\partial^2 \varphi}{\partial t^2}=c^2 \frac{\partial^2 \varphi}{\partial x^2}, 
\end{equation}
defined on $x \in [0,\ell]$. With $q=\varphi$ and $p=\varphi_t$, the associated Hamiltonian functional is given by
\begin{equation}
    \cH(q,p)=\int^{\ell}_0 \left[ \frac{1}{2}p^2 + \frac{1}{2}c^2q_x^2  \right] \ \text{d}x,
\end{equation}
and the original PDE can be recast as a Hamiltonian PDE
\begin{equation}
    \dot{q}=\frac{\delta \cH}{\delta p}, \quad \quad  \dot{p}=-\frac{\delta \cH}{\delta q}.
\end{equation}
With $n$ equally spaced grid points, we use finite difference for spatial discretization to obtain the space-discretized FOM Hamiltonian 
\begin{equation}
    \dH( \textbf{y} )=\sum^n_1 \left[ \frac{1}{2}p_i^2 + \frac{c^2 (q_{i+1}-q_i)^2}{4\dx^2} + \frac{c^2 (q_{i}-q_{i-1})^2}{4\dx^2}  \right] ,
\end{equation}
where $q_i:=\varphi(t,x_i)$, $p_i:=\varphi_t(t,x_i)$, and $\y=[q_1, \ldots, q_n, p_1, \ldots, p_n]^\top$. For $\dx \to 0$ with $n\dx=\ell$, $\dH\dx$ converges to the space-time continuous Hamiltonian functional $\cH$. The FOM equations are given by the Hamilton's equations for $\dH$,
\begin{equation}
    \dot{\y}=\Jn \nabla_{ \y }\dH(\y)=\begin{bmatrix}
       \bzero & \In \\
       -\In & \bzero
     \end{bmatrix} \begin{bmatrix}
       -c^2\mathbf{D}_{\text{fd}} & \bzero \\
       \bzero & \In
     \end{bmatrix}\y =\begin{bmatrix}
       \bzero & \In \\
       c^2\mathbf{D}_{\text{fd}} & \bzero
     \end{bmatrix} \y, 
     \label{eq:LW_FOM}
\end{equation}
where $\mathbf D_{\text{fd}}$ denotes the finite difference approximation for the spatial derivative $\partial_{xx}$. 
\par
We now set $\ell=1$ and the wave speed to $c=0.1$, which leads to the same linear FOM with periodic boundary condition as in \cite{peng2016symplectic}. To define the initial conditions, we need the following ingredients. Let $s(x)=10 |x-\frac{1}{2}|$, and consider the following cubic spline function over $s$:
\begin{equation*}
h(s)= \begin{cases} 
      1-\frac{3}{2}s^2  + \frac{3}{4}s^3& 0\leq s\leq 1, \\
      \frac{1}{4}(2-s)^3 & 1\leq s\leq 2, \\
      0 & s>2.
   \end{cases}
\end{equation*} 
The initial conditions are $\q(0)=[h(s(x_1)),\cdots,h(s(x_n))]$, and $\p(0)=\mathbf{0}$. We choose $n=500$ grid points leading to a discretized state $\y \in \Real^{1000}$. The FOM is numerically integrated until time $T=10$ using the implicit midpoint rule, for which the time-marching equations are
\begin{equation}
\frac{\y_{k+1}-\y_k}{\dt}  
= \Jn\nabla_{\y}\dH\left( \frac{\y_{k+1} + \y_k}{2} \right),
\label{eq:symp}
\end{equation}
with fixed time step $\dt$. The resulting time integrator is a second-order scheme and can also be used for dynamical systems that are not Hamiltonian. For this example, we choose $\dt=0.01$. To propagate the system forward in time, we need to solve a system of $2n=1000$ linear equations at every time step.
\par
We apply standard operator inference \cite{peherstorfer2016data} to the linear wave equation to demonstrate how violating the underlying symplectic structure leads to unstable ROMs. Based on the snapshot data $\Y$ from the FOM numerical simulation, we compute the POD basis $\V$ from $\Y$ via SVD where the POD basis vectors are columns of the POD basis matrix
 \begin{equation*}
 \V=[\v_1, \cdots, \v_{2r}] \in \Real^{2n \times 2r}.
 \end{equation*}
We also obtain the time-derivative data $\dot{\Y}$ from the snapshot data $\Y$ using a finite difference approximation. We then obtain the reduced state trajectory data $\hat{\Y}$ and the reduced time-derivative data $\dot{\hat{\Y}}$ via projections onto the POD basis matrix $\V$
 \begin{equation*}
 \hat{\Y}=\V^\top\Y, \quad \quad  \dot{\hat{\Y}}=\V^\top\dot{\Y}.
 \end{equation*} 
Based on the linear wave FOM equation, we postulate a linear model form $\dot{\hat{\y}}=\hat{\mathbf D}\hat{\y}$ for learning the ROM. Thus, for the linear wave equation, the standard operator inference problem essentially involves solving the following least-squares problem for $\hat{\mathbf D}$
\begin{equation}
\min_{\hat{\mathbf D}} || \dot{\hat{\Y}} - \hat{\mathbf D} \hat{\Y} ||_F.
\end{equation}
\par
Figure \ref{fig: stacked_state} shows state approximation error for ROMs of different sizes. We see that the state error decreases up to $2r=20$ and oscillates from $2r=20$ to $2r=40$. Although the state error results over the training data indicate decrease in the state error with increasing reduced dimension, the ROMs do not conserve the space-discretized Hamiltonian $\dH$. The increasing FOM energy error in Figure \ref{fig: stacked_Hd} confirm the fact that the reduced solution trajectories do not conserve the system energy. In fact, the energy error plot for long-time simulation outside the training data shows that the learned ROMs lead to unphysical predictions. In addition to violating the underlying Hamiltonian structure, the learned operator $\hat{\mathbf D}$ contains nonzero matrices on the diagonals (not shown here), which also illustrates that the standard operator inference does not preserve the block structure that the system should have, see \eqref{eq:LW_FOM}. The coupling structure in Hamiltonian systems is intrinsically connected to the physical variables $\q$ and $\p$ in the state vector. Standard operator inference loses the physical meaning of the states $\q$ and $\p$ in the model reduction process. 
\begin{figure}
\captionsetup[subfigure]{oneside,margin={1.5cm,0 cm}}
\begin{subfigure}{.38\textwidth}
       \setlength\fheight{4 cm}
        \setlength\fwidth{\textwidth}
%
%
\begin{tikzpicture}

\begin{axis}[%
width=0.974\fwidth,
height=\fheight,
at={(0\fwidth,0\fheight)},
scale only axis,
xmin=0,
xmax=40,
xlabel style={font=\color{white!15!black}},
xlabel={\small Reduced Dimension $2r$},
ymode=log,
ymin=0.01,
ymax=1.06384300836693,
yminorticks=true,
ylabel style={font=\color{white!15!black}},
ylabel={\small Relative State Error},
axis background/.style={fill=white},
xmajorgrids,
ymajorgrids
]
\addplot [color=blue, dashed,line width=1.5pt, mark=o, mark options={solid, blue}, forget plot]
  table[row sep=crcr]{%
4	1.06384300836694\\
8	0.430572590951954\\
12	0.107690160943356\\
16	0.0363756372170366\\
20	0.0151019448993194\\
24	0.0234466439183185\\
28	0.0887799015988189\\
32	0.018308867290126\\
36	0.051127987578703\\
40	0.0723924268283788\\
};
\end{axis}

\begin{axis}[%
width=1.257\fwidth,
height=1.257\fheight,
at={(-0.163\fwidth,-0.163\fheight)},
scale only axis,
xmin=0,
xmax=1,
ymin=0,
ymax=1,
axis line style={draw=none},
ticks=none,
axis x line*=bottom,
axis y line*=left
]
\end{axis}
\end{tikzpicture}%
\caption{State error (training data)}
\label{fig: stacked_state}
    \end{subfigure}
    \hspace{1.4cm}
    \begin{subfigure}{.38\textwidth}
           \setlength\fheight{4 cm}
           \setlength\fwidth{\textwidth}
\input{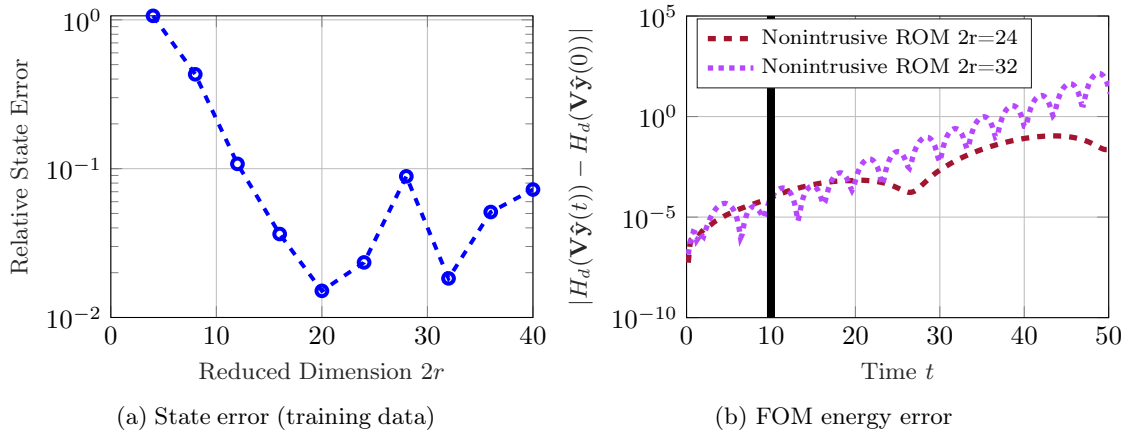}
\caption{FOM energy error}
\label{fig: stacked_Hd}
    \end{subfigure}
    \caption{Linear wave equation: Even though plot (a) shows low approximation error in the training data regime, the corresponding FOM energy error behavior in plot (b) reveals that the standard operator inference violates the underlying Hamiltonian structure. The black line indicates end of training time interval. See Section \ref{s:41} for numerical implementation details. }
\label{fig:motivation}
\end{figure}
%
%
\subsection{Hamiltonian Operator Inference}
\label{s:32} 
In this work, we consider the situation that we have a Hamiltonian PDE model \eqref{eq:HPDE} with canonical structure, and  we have simulated data thereof. The goal of this work is to learn a Hamiltonian ROM from data of a canonical Hamiltonian system, so that the learned ROM: 
\begin{enumerate}
\item is a canonical Hamiltonian system;
\item retains the physical interpretation of the state variables and preserves the coupling structure;
\item respects the symmetric property of structure-preserving space discretizations.
\end{enumerate}
In addition to the canonical Hamiltonian structure, we also assume that we have knowledge about $H_{\text{nl}}$ at the PDE level ,  which is in line with our gray-box setting \eqref{eq:Hc}. If we consider the general wave equation from Section \ref{s:211}, then the nonlinear component $H_{\text{nl}}(q,p)$ of the space-time continuous Hamiltonian functional \eqref{eq:Hc} is assumed to be given explicitly, whereas the  quadratic terms in $H_{\text{quad}}$ and details about their spatial discretization are unavailable. 
\par
Next, we introduce our new framework. Let $\y_1, \cdots, \y_k$ be the solutions of the Hamiltonian FOM at $t_1,\cdots, t_k$ computed with a structure-preserving numerical integration scheme and initial condition $\y_0$. We define the snapshot matrices
\begin{equation}
\Q=\begin{bmatrix}
       \q_1  \cdots \q_K 
     \end{bmatrix} \in \Real^{n \times K}, \quad \quad \Pp=\begin{bmatrix}
      \p_1  \cdots \p_K 
     \end{bmatrix} \in \Real^{n \times K}.
     \label{eq:snapshot}
\end{equation}
Assuming knowledge about $H_{\text{nl}}$ in \eqref{eq:Hc}, we define the nonlinear forcing $\fq(\y)$ and $\fp(\y)$ as 
\begin{equation}
\fq(\y)=\begin{bmatrix}
       \frac{\partial H_{\text{nl}}}{\partial p_1} (q_1,p_1)  \cdots \frac{\partial H_{\text{nl}}}{\partial p_n} (q_n,p_n)
     \end{bmatrix}^\top \in \Real^{n}, \quad \quad \fp(\y)=\begin{bmatrix}
       \frac{\partial H_{\text{nl}}}{\partial q_1} (q_1,p_1)  \cdots \frac{\partial H_{\text{nl}}}{\partial q_n} (q_n,p_n)
     \end{bmatrix}^\top \in \Real^{n }.
     \label{eq:fq}
\end{equation}
We utilize the explicit form of $\fp$ and $\fq$ to define the forcing snapshot matrices
\begin{equation}
\Fq=\begin{bmatrix}
       \fq(\y_1)  \cdots   \fq(\y_K)
     \end{bmatrix} \in \Real^{n \times K}, \quad \quad \Fp=\begin{bmatrix}
    \fp(\y_1)  \cdots   \fp(\y_K)
     \end{bmatrix} \in \Real^{n \times K}.
     \label{eq:F}
\end{equation}
We also compute the time-derivative data $\dot{\q}$ and $\dot{\p}$ from the state trajectory data $\q$ and $\p$ using a finite difference scheme to build the snapshot matrices of the time-derivative data
\begin{equation}
\dot{\Q}=\begin{bmatrix}
       \dot{\q}_1  \cdots \dot{\q}_K 
     \end{bmatrix} \in \Real^{n \times K}, \quad \quad \dot{\Pp}=\begin{bmatrix}
        \dot{\p}_1  \cdots \dot{\p}_K 
     \end{bmatrix} \in \Real^{n \times K}.
     \label{eq:dQ}
\end{equation}
Given these snapshot matrices, our goal is to learn a Hamiltonian ROM directly from the data. To learn the reduced operators, we propose to project FOM trajectories onto low-dimensional symplectic subspaces of the high-dimensional state spaces and then fit operators to the projected trajectories in a structure-preserving way. For the symplectic projection step, we choose the cotangent lift algorithm to generate our symplectic basis matrix. In addition to being a symplectic basis matrix, the specific block structure of the basis matrix allows us to retain physical interpretation of $\q$ and $\p$ variables in the reduced setting, i.e., 
\begin{equation}
\begin{bmatrix}
      \q \\
     \p
 \end{bmatrix}\approx \begin{bmatrix}
      \mathbf{\Phi} & \bzero \\
     \bzero & \mathbf{\Phi}
 \end{bmatrix}\begin{bmatrix}
      \qhat \\
     \phat
 \end{bmatrix}.
\end{equation}
We obtain projections of the trajectory snapshot data via the projections onto the symplectic basis matrix $\Vq=\Vp=\mathbf{\Phi} \in \Real^{n \times r}$ obtained via the cotangent lift algorithm,
\begin{equation}
\Qhat= \Vq^\top\Q \in \Real^{r \times K}, \quad \quad \Phat=\Vp^\top\Pp \in \Real^{r \times K}.
\label{eq:Yhat}
\end{equation}
Similarly, we obtain projections of the nonlinear forcing data to obtain 
\begin{equation}
\Fhatq= \Vp^\top\Fq \in \Real^{r \times K}, \quad \quad \Fhatp=\Vq^\top\Fp \in \Real^{r \times K}.
\label{eq:Fhat}
\end{equation}
We also compute projections of the time-derivative data to obtain the reduced time-derivative data
\begin{equation}
\dot{\Qhat}= \Vq^\top\dot{\Q} \in \Real^{r \times K}, \quad \quad \dot{\Phat}=\Vp^\top\dot{\Pp} \in \Real^{r \times K}.
     \label{eq:dQhat}
\end{equation}
Inspired by the knowledge of the Hamiltonian functional, we define the following reduced Hamiltonian in terms of the inferred reduced operators $\Dhatq \in \Real^{r \times r}$ and $\Dhatp\in \Real^{r \times r}$
\begin{equation} 
 \Hr(\qhat,\phat)= \frac{1}{2}\qhat^\top \Dhatq \qhat
 + \frac{1}{2}\phat^\top \Dhatp \phat
 + \Hr_{\text{nl}}(\qhat,\phat).
 \label{eq:Hrmodel}
\end{equation}
Based on the assumed form for $\Hr(\qhat,\phat)$, we derive the ROM equations of motion
\begin{gather*}
\dot{\qhat} =\frac{\partial  \Hr }{\partial \phat}=\Dhatp \phat +  \Vp^\top\fq(\Vq \qhat,\Vp \phat),     \\
\dot{\phat} =- \frac{\partial  \Hr }{\partial  \qhat}=-\Dhatq \qhat-  \Vq^\top\fp(\Vq \qhat,\Vp \phat).
\end{gather*}
Using this Hamiltonian ROM form, we propose to solve the following optimization problem to compute $\Dhatq$ and $\Dhatp$ 
\begin{equation}
\label{eq:H-OpInf_opt}
\min_{\substack{\Dhatq=\Dhatq^\top,\\ \Dhatp=\Dhatp^\top}} \bigg| \bigg| \begin{bmatrix}
\dot{\Qhat}- \Fhatq(\Qhat,\Phat) \\ \dot{\Phat} + \Fhatp(\Qhat,\Phat) 
\end{bmatrix} -  
\begin{bmatrix} 
\bzero & \Dhatp \\ 
-\Dhatq & \bzero
\end{bmatrix}
\begin{bmatrix}
 \Qhat\\ \Phat
\end{bmatrix} 
\bigg| \bigg| _F.
\end{equation}
The symmetric constraints on $\Dhatq$ and $\Dhatp$ ensure that the learned reduced operators retain the symmetric property of the full-model operators introduced during the structure-preserving spatial discretization, see Section \ref{s:211}. The symmetric reduced operators $\Dhatq$ and $\Dhatp$ learned via H-OpInf yield structure-preserving ROMs that are Hamiltonian systems.
\begin{remark} 
    We have introduced the H-OpInf framework in the context of canonical Hamiltonian systems. Some Hamiltonian systems, such as the Korteveg-de-Vries (KdV) equation, Burgers' equation, and Maxwell's equations,   possess a more general Hamiltonian structure, i.e., $\mathbf{S}_{\rm d}=-\mathbf{S}_{\rm d}^\top \neq \Jn$. Using an even number of spatial grid points, the noncanonical Hamiltonian PDE yields an even-dimensional FOM which can be then transformed to the canonical form with $\mathbf{S}_{\rm d}=\Jn$ using a congruent transformation. This transformation requires access to FOM operators. However, if the FOM is given in the transformed canonical form then the H-OpInf framework directly carries over to such transformed systems. Alternatively, if we assume access to such a congruent transformation then our H-OpInf method may be combined with the lifting transformations as in \cite{SKHW2020_learning_ROMs_combustor,qian2019transform, QKPW2020_lift_and_learn}  to derive Hamiltonian ROMs for noncanonical Hamiltonian systems. 
\end{remark}
%
\subsection{Theoretical Result}
\label{s:33} 
We show that under certain conditions on the time discretization, the nonintrusive Hamiltonian ROM operators via H-OpInf converge to the  intrusive Hamiltonian ROM operators. In order to obtain these results, we make the following two assumptions on the time discretization of the FOM and ROM. 
\begin{assumption}
\label{as:1}
 The time stepping scheme for the FOM is convergent, i.e., 
\begin{equation}
\max_{i \in \{1, \cdots, T/\Delta t \}} \bigg|\bigg| \y_i - \y(t_i) \bigg|\bigg|_2 \to 0 \quad as \quad \dt \to 0,
\end{equation}
where $\y_i$ is the discrete state of the FOM system at time $t_i$ computed with a time stepping scheme. 
\end{assumption}
\begin{assumption}
\label{as:2}
 The derivatives approximated from projected states, $\dot{\hat{\y}}_k$, converge to $\frac{\text{d}}{\text{d}t}\hat{\y}(t_k )$ as the discretization time step $\dt \to 0$, i.e.,
\begin{equation}
\max_{i \in \{1, \cdots, T/\Delta t \}} \bigg|\bigg|\dot{\hat{\y}}_i - \frac{\text{d}}{\text{d}t}\hat{\y}(t_i) \bigg|\bigg|_2 \to 0 \quad as \quad \Delta t \to 0.
\end{equation}
\end{assumption}
\begin{theorem}
For a given symplectic basis matrix $\Vq=\Vp=\Phi \in \Real^{n \times r}$ obtained via the cotangent lift algorithm, let $\tilde {\mathbf D}_{\q}=\Vq^\top\mathbf D_{\q}\Vq$ and $\tilde {\mathbf D}_{\p}=\Vp^\top\mathbf D_{\p}\Vp$
be the intrusively projected ROM operators. If the data matrix has full column rank,
then for every $\epsilon > 0$, there exists $2r \leq 2n$ and a time step size $\dt> 0$ such that for the difference between the symmetric learned
operators $\Dhatq, \Dhatp$ and the symmetric (intrusive) projection-based $\tilde{\mathbf D}_{\q}, \tilde{\mathbf D}_{\p}$, we have
\begin{equation*}
|| \Dhatq- \tilde{\mathbf D}_{\q} ||_F \leq \epsilon, \quad \quad || \Dhatp- \tilde{\mathbf D}_{\p} ||_F \leq \epsilon.
\end{equation*}

\end{theorem}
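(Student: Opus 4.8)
The plan is to first decouple the constrained problem \eqref{eq:H-OpInf_opt}. Since the unknown block operator is purely off-diagonal, the squared residual splits into a sum of two independent terms, so that $\Dhatp$ and $\Dhatq$ are, respectively, the minimizers of the two separate symmetric least-squares problems
\[
\Dhatp = \arg\min_{\mathbf X = \mathbf X^\top}\big\| (\dot{\Qhat} - \Fhatq) - \mathbf X\Phat \big\|_F, \qquad \Dhatq = \arg\min_{\mathbf X = \mathbf X^\top}\big\| (\dot{\Phat} + \Fhatp) + \mathbf X\Qhat \big\|_F .
\]
It then suffices to treat $\Dhatp$; the argument for $\Dhatq$ is identical with the roles of $\q,\p$ and the signs interchanged, and I will compare $\Dhatp$ with the symmetric intrusive operator $\tilde{\mathbf D}_{\p} = \Vp^\top\Dp\Vp$.

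Next I would isolate the two sources of discrepancy. Projecting the continuous FOM identity $\dot\q(t) = \Dp\p(t) + \fq(\y(t))$ by $\Vp^\top = \Phi^\top$ and inserting $\Phi\Phi^\top$ gives, columnwise at $t=t_k$,
\[
\frac{d}{dt}\qhat(t_k) - \Fhatq(t_k) = \tilde{\mathbf D}_{\p}\,\phat(t_k) + \boldsymbol{\eta}_k, \qquad \boldsymbol{\eta}_k := \Phi^\top\Dp(\In - \Phi\Phi^\top)\p(t_k),
\]
where $\boldsymbol{\eta}_k$ is the closure (projection) defect measuring the failure of the snapshot to lie in $\mathrm{range}(\Phi)$. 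This defect vanishes identically as soon as $\Phi\Phi^\top = \In$; since the cotangent-lift basis is orthonormal, this holds when no modes are truncated, i.e.\ for $2r = 2n$ (and, more generally, whenever the trajectory lies in $\mathrm{range}(\Phi)$). I would therefore fix such an $r$, leaving only the temporal error: replacing the exact derivative $\frac{d}{dt}\qhat(t_k)$ by the finite-difference value $\dot{\qhat}_k$ perturbs the right-hand data by $\dot{\qhat}_k - \frac{d}{dt}\qhat(t_k)$, which tends to zero uniformly in $k$ as $\dt \to 0$ by Assumption~\ref{as:2}.

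The remaining and technically central step is to show that the symmetric least-squares solution map is stable under this vanishing data perturbation, uniformly as the number of snapshots $K = T/\dt$ grows. I would characterize the minimizer through its normal equations: $\mathbf X = \Dhatp$ solves the Lyapunov-type equation $\mathbf X\mathbf W + \mathbf W\mathbf X = \mathbf C$, with $\mathbf W = \Phat\Phat^\top$ and $\mathbf C = \mathbf B\Phat^\top + \Phat\mathbf B^\top$ for $\mathbf B = \dot{\Qhat} - \Fhatq$. The full-rank hypothesis makes $\mathbf W$ symmetric positive definite, so this equation has a unique solution depending continuously on $(\mathbf W,\mathbf C)$. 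For defect-free, exact-derivative data one has $\mathbf B = \tilde{\mathbf D}_{\p}\Phat$, and using the symmetry of $\tilde{\mathbf D}_{\p}$ one checks that $\mathbf C = \tilde{\mathbf D}_{\p}\mathbf W + \mathbf W\tilde{\mathbf D}_{\p}$, so that $\mathbf X = \tilde{\mathbf D}_{\p}$ is \emph{the} minimizer in that limit. Because the common $1/\dt$ Riemann-sum scaling of $\mathbf W$ and $\mathbf C$ cancels in the Lyapunov equation, the solution remains bounded as $\dt \to 0$, and the $O(\|\dot{\qhat}_k - \frac{d}{dt}\qhat(t_k)\|)$ data perturbation propagates to an arbitrarily small perturbation of the solution; choosing $\dt$ small then yields $\|\Dhatp - \tilde{\mathbf D}_{\p}\|_F \le \epsilon$, and the mirrored argument gives $\|\Dhatq - \tilde{\mathbf D}_{\q}\|_F \le \epsilon$.

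I expect the main obstacle to be precisely this last stability argument: the closure defect and the temporal discretization error must be controlled \emph{simultaneously} while $K \to \infty$, and the constrained (symmetric) least-squares solution must be shown to depend continuously on the data despite this growing sample count. The Lyapunov-equation reformulation, combined with the positive definiteness supplied by the full-rank assumption and the cancellation of the $\dt$-scaling, is what makes the estimate quantitative; verifying that $\tilde{\mathbf D}_{\p}$ is the exact-data minimizer crucially uses its symmetry, which is in turn enforced by the symmetric constraint built into \eqref{eq:H-OpInf_opt}.
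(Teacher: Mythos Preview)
Your proposal is correct and follows essentially the same approach as the paper: decouple the block problem into two symmetric least-squares subproblems, then split the residual into a projection/closure defect (vanishing as $r\to n$, since then $\Phi\Phi^\top=\In$) and a temporal discretization error (vanishing as $\dt\to 0$ by Assumption~\ref{as:2}), and finally invoke the full-rank hypothesis to pass from small residual to small operator error. The paper carries out the same add-and-subtract manipulation inside the Frobenius norm, labeling the two error contributions $(I)$ and $(II)$, and then appeals to the full-rank condition in a single sentence without further detail.

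Where you go further is in making the last step quantitative: you explicitly reformulate the symmetric least-squares problem via its Lyapunov normal equations $\mathbf X\mathbf W+\mathbf W\mathbf X=\mathbf C$, observe that $\tilde{\mathbf D}_{\p}$ solves the unperturbed equation exactly, and then argue continuity of the Lyapunov solution map under the $\mathcal O(\dt)$ data perturbation (with the $1/\dt$ Riemann-sum scaling cancelling between $\mathbf W$ and $\mathbf C$). The paper simply asserts the conclusion at that point; your Lyapunov-based stability argument is the natural way to make that step rigorous, and indeed the paper derives the same Lyapunov equations \eqref{eq:lyapq}--\eqref{eq:lyapp} later, in the computational-procedure section, but does not use them in the proof.
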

\begin{proof} Consider a canonical Hamiltonian system with the following governing equations
\begin{equation} 
\dot{\q}=\mathbf D_{\p}\p + \fq(\q,\p),\quad \quad \dot{\p}=-\mathbf D_{\q}\q - \fp(\q,\p),
 \label{eq:Hdfom}
\end{equation}
where $\mathbf{D}_{\q},\mathbf{D}_{\p}$ are the linear symmetric FOM operators, and $\fq, \fp$ are the nonlinear forcing terms from \eqref{eq:fq}. Given FOM snapshot data $\Q = [\q_1, \cdots, \q_K] \in \Real^{n \times K}$ and $\Pp=[\p_1, \cdots, \p_K] \in \Real^{n \times K} $, we build $\dot{\Q}_{\rm ana}=[\dot{\q}_{1,\rm ana},\cdots, \dot{\q}_{K,\rm ana}]$ and $\dot{\Pp}_{\rm ana}=[\dot{\p}_{1,\rm ana},\cdots, \dot{\p}_{K,\rm ana}]$ by evaluating the Hamiltonian FOM vector field \eqref{eq:Hdfom}, i.e., 
\begin{equation*}
\dot{\q}_{k,\rm ana}=\mathbf D_{\p}\p_k + \fq(\q_k,\p_k),\quad \quad \dot{\p}_{k,\rm ana}=-\mathbf D_{\q}\q_k - \fp(\q_k,\p_k), \quad \quad k=1, \cdots, K.
\end{equation*} 
We also define a finite difference operator $I_{t}$ which operates on the FOM snapshot data to approximate the time-derivative data
\begin{equation*}
\dot{\Q}_{\rm ana}\approx I_t(\Q),\quad \quad  \dot{\Pp}_{\rm ana}\approx I_t(\Pp).
\end{equation*}
Using the definition of the finite difference operator $I_t$, the reduced time-derivative data from \eqref{eq:dQhat} can be written as $\dot{\hat \Q}=\Vq^\top I_t(\Q)$ and $\dot{\hat \Pp}=\Vp^\top I_t(\Pp)$. The H-OpInf problem of learning the symmetric operator $\Dhatq$ can be written as 
 \begin{align}
  \label{eq:Dhatq}
 &\min_{\Dhatq=\Dhatq^\top} \bigg|\bigg| \dot{\Phat} + \Dhatq\Qhat + \Fhatp (\Qhat, \Phat)  \bigg|\bigg|_F \nonumber \\
 &= \min_{\Dhatq=\Dhatq^\top} \bigg|\bigg| \Vp^\top I_t(\Pp) + \Dhatq\Qhat + \Fhatp (\Qhat, \Phat)  \bigg|\bigg|_F \nonumber \\
&= \min_{\Dhatq=\Dhatq^\top} \bigg|\bigg| \Vp^\top I_t(\Pp) - \Vp^\top \dot{\Pp}_{\rm ana} + \Vp^\top\dot{\Pp}_{\rm ana} + \left(\Dhatq - \tilde{\mathbf D}_{\q}\right) \Qhat +\tilde{\mathbf D}_{\q} \Qhat + \Fhatp (\Qhat, \Phat)  \bigg|\bigg|_F  \nonumber \\
&= \min_{\Dhatq=\Dhatq^\top} \bigg|\bigg| \Vp^\top \left(I_t(\Pp) - \dot{\Pp}_{\rm ana}\right) +  \left(\Dhatq - \tilde{\mathbf D}_{\q}\right) \Qhat + \Vp^\top\dot{\Pp}_{\rm ana} + \tilde{\mathbf D}_{\q} \Qhat + \Fhatp (\Qhat, \Phat)  \bigg|\bigg|_F.   
 \end{align}
 Similarly, the H-OpInf problem of learning the symmetric operator $\Dhatp$ can be written as 
  \begin{align}
  \label{eq:Dhatp}
 &\min_{\Dhatp=\Dhatp^\top} \bigg|\bigg| \dot{\Qhat} - \Dhatp\Phat - \Fhatq (\Qhat, \Phat)  \bigg|\bigg|_F \nonumber \\
&= \min_{\Dhatp=\Dhatp^\top} \bigg|\bigg| \Vq^\top \left(I_t(\Q) - \dot{\Q}_{\rm ana}\right) -  \left(\Dhatp - \tilde{\mathbf D}_{\p}\right) \Phat - \tilde{\mathbf D}_{\p}\Phat+ \Vq^\top\dot{\Q}_{\rm ana} -  \Fhatq (\Qhat, \Phat)  \bigg|\bigg|_F.  
 \end{align}
 Combining \eqref{eq:Dhatq} and \eqref{eq:Dhatp}, the H-OpInf problem can be written as
 \begin{multline}
 \min_{\substack{\Dhatq=\Dhatq^\top,\\ \Dhatp=\Dhatp^\top}} \bigg| \bigg| \begin{bmatrix}
\Vp^{\top} & \bzero \\ \bzero & \Vq^\top 
\end{bmatrix}\underbrace{ \left( \begin{bmatrix}
 \dot{\Pp}_{\rm ana} \\ \dot{\Q}_{\rm ana}
\end{bmatrix}-\begin{bmatrix}
\bzero & -\mathbf{D}_{\q} \\ \mathbf{D}_{\p}  &\bzero  
\end{bmatrix}\begin{bmatrix}
\Vp\Vp^\top\Pp \\ \Vq\Vq^\top\Q
\end{bmatrix}  + \begin{bmatrix}
\Vp\Vq^\top\Fp \\ -\Vq\Vp^\top\Fq
\end{bmatrix}  \right)}_{(I)} \\ + \begin{bmatrix}
\Vp^{\top} & \bzero \\ \bzero & \Vq^\top 
\end{bmatrix}\underbrace{ \begin{bmatrix}
I_t(\Pp) - \dot{\Pp}_{\rm ana} \\ I_t(\Q) - \dot{\Q}_{\rm ana}
\end{bmatrix} }_{(II)} + \begin{bmatrix}
\bzero & \Dhatq-\tilde{\mathbf{D}}_{\q} \\- (\Dhatp-\tilde{\mathbf{D}}_{\p})  &\bzero  
\end{bmatrix}\begin{bmatrix}
\Phat \\ \Qhat
\end{bmatrix}
\bigg| \bigg| _F,
 \end{multline}
 where we have used $\tilde{\mathbf D}_{\q}=\Vq^\top\mathbf D_{\q}\Vq$ and  $\tilde{\mathbf D}_{\p}=\Vp^\top\mathbf D_{\p}\Vp$. Since, we have used the cotangent lift algorithm for projection, we have $\Vq=\Vp=\Phi$. Based on the time discretization assumptions, the terms $(I),(II) \to \bzero$ for $\Delta t \to 0$ and $r \to n$, and thus, the learned operators $\Dhatq,\Dhatp$ converge to the structure-preserving intrusive operators $\tilde{\mathbf D}_{\q},\tilde{\mathbf D}_{\p}$. Therefore, in the pre-asymptotic case, there exists for all $0<\epsilon \in \Real$ a small enough time step $\Delta t$ and a large enough reduced dimension $r \leq n$ such that we can use the full-rank condition of $\Qhat,\Phat$ to deduce $|| \Dhatq- \tilde{\mathbf D}_{\q} ||_F \leq \epsilon, || \Dhatp- \tilde{\mathbf D}_{\p} ||_F \leq \epsilon$.
 \end{proof}
The theorem shows that the learned symmetric Hamiltonian ROM operators converge to the intrusive symmetric Hamiltonian ROM operators as $r \to n$. However, this result does not provide a convergence rate, and in our practical experience there are numerical examples where the difference in reduced operators might not monotonically decrease for low-dimensional Hamiltonian ROMs, such as the linear wave example in Section~\ref{s:42}. In such cases, the FOM energy error becomes a crucial metric to assess the reliability of nonintrusive Hamiltonian ROM for long-time predictions. The continuous-time intrusive Hamiltonian ROM due to its specific choice of the reduced Hamiltonian, i.e., $\tilde{H}(\hat{\y}):=\dH(\V\hat{\y})$, always conserves the FOM Hamiltonian $\dH$.  In contrast, continuous-time nonintrusive Hamiltonian ROM conserves the reduced Hamiltonian $\Hr$. Using the theoretical result, we can interpret the learned reduced Hamiltonian $\Hr$ as a perturbation of the intrusive reduced Hamiltonian $\tilde{H}$, i.e., $\Hr=\tilde{H} + \Delta \tilde{H}$. Since the nonlinear component is the same for both nonintrusive and intrusive Hamiltonian ROMs, the perturbation can be written as 
\begin{equation*}
\Delta \tilde{H}=\frac{1}{2}\qhat^\top (\Dhatq- \tilde{\mathbf D}_{\q} ) \qhat
 + \frac{1}{2}\phat^\top (\Dhatp - \tilde{\mathbf D}_{\p} ) \phat.
\end{equation*}
Thus, the nonintrusive Hamiltonian ROM trajectories simulate a perturbed intrusive Hamiltonian ROM Hamiltonian system and the perturbation $\Delta \tilde{H}$ depends on the difference in reduced operators. 
%
%
\subsection{Computational Procedure}
\label{s:34} 
Due to the canonical nature of the reduced model form, the original optimization problem \eqref{eq:H-OpInf_opt} can be broken down into separate, symmetric linear least-squares problems of the form
\begin{equation}
\min_{\Dhatp=\Dhatp^\top} \bigg| \bigg| 
\dot{\Qhat}- \Fhatq(\Qhat,\Phat) - \Dhatp\Phat \bigg| \bigg| _F, \quad \quad \min_{\Dhatq=\Dhatq^\top} \bigg| \bigg| 
\dot{\Phat} + \Fhatp(\Qhat,\Phat) 
 + \Dhatq\Qhat \bigg| \bigg| _F. 
 \label{eq:HOpInf}
 \end{equation}
Given reduced state data $ \Qhat,\Phat$ and residual data $\hat{\mathbf R}_{\q},\hat{\mathbf R}_{\p}$, our goal is to find symmetric reduced operators $\Dhatq=\Dhatq^\top$ and $\Dhatp=\Dhatp^\top$ that minimize $ \bigg| \bigg| \Qhat^\top\Dhatq - \hat{\mathbf R}_{\q}^\top \bigg| \bigg| _F$ and  $\bigg| \bigg| \Phat^\top\Dhatp - \hat{\mathbf R}_{\p}^\top \bigg| \bigg| _F$ respectively. Both problems are symmetric linear least-squares problems, so let us consider the optimization problem for infering $\Dhatq$. We formulate the symmetric linear least-squares problem 
\begin{equation}
\min_{\Dhatq=\Dhatq^\top}  \bigg| \bigg| \Qhat^\top\Dhatq - \hat{\mathbf R}_{\q}^\top \bigg| \bigg|^2 _F,
\end{equation}
as a constrained optimization problem with the following Lagrangian
\begin{equation}
\mathcal{L}(\Dhatq,\mathbf{\Lambda}):=  \bigg| \bigg| \Qhat^\top\Dhatq - \hat{\mathbf R}_{\q}^\top \bigg| \bigg|^2 _F + \langle \mathbf{\Lambda}, \Dhatq-\Dhatq^\top \rangle,
\end{equation}
where $\mathbf{\Lambda} \in \Real^{r \times r}$ is the Lagrange multiplier and  $\langle \cdot, \cdot \rangle$ is the elementwise inner product, i.e., $\langle \mathbf{\Lambda}, \hat{\mathbf{D}} \rangle:=\text{Tr}(\mathbf{\Lambda}^\top\hat{\mathbf{D}})$.
Differentiating the Lagrangian with respect to $\Dhatq$ and $\mathbf{\Lambda}$, we obtain the following matrix equations
\begin{align*}
2\Qhat(\Qhat^\top\Dhatq- \hat{\mathbf R}_{\q}^\top) + \mathbf{\Lambda} - \mathbf{\Lambda}^\top = \mathbf{0}, \\
\Dhatq - \Dhatq^\top = \mathbf{0},
\end{align*}
where the second equation is simply the symmetric constraint condition. Rewriting the first matrix equation as
\begin{equation}
\Qhat(\Qhat^\top\Dhatq- \hat{\mathbf R}_{\q}^\top) = -\frac{1}{2} \left(  \mathbf{\Lambda} - \mathbf{\Lambda}^\top  \right),
\end{equation}
reveals that $\Qhat(\Qhat^\top\Dhatq- \hat{\mathbf R}_{\q}^\top)$ is a skew-symmetric matrix, i.e., 
\begin{equation}
\Qhat(\Qhat^\top\Dhatq- \hat{\mathbf R}_{\q}^\top) = -\left(\Qhat(\Qhat^\top\Dhatq- \hat{\mathbf R}_{\q}^\top)\right)^\top.
\end{equation}
Rewriting the above equation, we can obtain the following Lyapunov equation
\begin{equation}
(\Qhat\Qhat^\top)\Dhatq + \Dhatq (\Qhat\Qhat^\top)=\Qhat \hat{\mathbf R}_{\q}^\top + \hat{\mathbf R}_{\q}\Qhat^\top.
\label{eq:lyapq}
\end{equation}
Similarly, the symmetric reduced operator $\Dhatp$ solves 
\begin{equation}
(\Phat\Phat^\top)\Dhatp + \Dhatp (\Phat\Phat^\top)=\Phat \hat{\mathbf R}_{\p}^\top + \hat{\mathbf R}_{\p}\Phat^\top.
\label{eq:lyapp}
\end{equation}
Thus, the original inference problem for $\Dhatq$ and $\Dhatq$ can be broken down into separate symmetric linear least-squares problems and subsequently, the symmetric reduced operators can be obtained by solving Lyapunov equations \eqref{eq:lyapq} and \eqref{eq:lyapp}. Algorithm \ref{alg} summarizes H-OpInf for Hamiltonian systems with nonpolynomial nonlinearities. \\
 \begin{algorithm}
\caption{Hamiltonian operator inference for canonical Hamiltonian systems}
\begin{algorithmic}[1]
\Require Nonlinear component of Hamiltonian functional $H_{\rm nl}$, snapshot data $\Q, \Pp \in \Real^{n \times K}$, and reduced dimension $2r$. 
\Ensure Symmetric reduced operators $\Dhatq$ and $\Dhatp$.
    \State Use knowledge of $H_{\rm nl}$ to identify correct model form for the reduced Hamiltonian $\Hr$ \eqref{eq:Hrmodel}.
    \State Compute forcing snapshot data $\Fq,\Fp \in \Real^{n \times K}$ \eqref{eq:F}.
        \State Compute  time-derivative data $\dot{\Q},\dot{\Pp} \in \Real^{n \times K}$ \eqref{eq:dQ} from state trajectory data $\Q,\Pp$ using the finite-difference scheme \eqref{eq:fd}.
    \State Build symplectic basis matrix $\V \in \Real^{2n \times 2r}$ from $\mathbf{M}=[\Q,\Pp]$ using the cotangent lift algorithm, see Section \ref{s:22}.
    \State Project to obtain reduced state data $\Qhat,\Phat \in \Real^{r \times K}$ \eqref{eq:Yhat}, reduced time-derivative data $\dot{\Qhat},\dot{\Phat} \in \Real^{r \times K}$ \eqref{eq:dQhat}, and reduced nonlinear forcing data $\Fhatq, \Fhatp \in \Real^{r \times K}$ \eqref{eq:Fhat}.
    \State Solve separate symmetric linear least-squares problems via Lyapunov equations \eqref{eq:lyapq}--\eqref{eq:lyapp} to nonintrusively infer symmetric reduced operators $\Dhatq$ and $\Dhatp$.
\end{algorithmic}
\label{alg}
\end{algorithm}
%
\section{Numerical Results}
\label{s:4} 
In this section, we study the numerical performance of H-OpInf for three Hamiltonian PDEs with increasing level of complexity. We revisit the linear wave equation from Section \ref{s:31} and demonstrate that H-OpInf works for different structure-preserving space discretizations. We then study the nonlinear Schr\"{o}dinger equation, a Hamiltonian PDE with cubic nonlinearity, to investigate the numerical performance of H-OpInf for nonlinear systems. Finally, we apply our structure-preserving operator inference approach to the sine-Gordon equation to understand its numerical performance for PDEs with nonpolynomial nonlinearities.

\subsection{Numerical Implementation Details}
\label{s:41}
We compare the quality of the learned Hamiltonian ROM with the intrusive proper symplectic decomposition (PSD), a Hamiltonian ROM obtained via the cotangent lift algorithm as outlined in Section~\ref{s:22}. Below we give some information about our FOM and ROM numerical simulations:
\begin{itemize}
\item For numerical time integration, we use the implicit midpoint rule \eqref{eq:symp} for all FOMs and ROMs. 
The implicit midpoint rule is a symplectic scheme for Hamiltonian systems which preserves the symplectic structure and exhibits bounded energy error for both FOM and ROM simulations. The symplectic structure preservation also implies exact preservation of any quadratic invariants of the motion. The implicit midpoint rule satisfies Assumption \ref{as:1}; details about the numerical properties of this symplectic time integrator can be found in \cite{hairer2006geometric,leimkuhler_reich_2005}.
\item To compute time-derivative data from the snapshot data we use the following fourth-order finite difference scheme 
\begin{equation}
\dot{\y}_{k}\approx \frac{-\y_{k+2}+ 8\y_{k+1}-8\y_{k-1}  + \y_{k-2}}{12\dt}.
\label{eq:fd}
\end{equation}
For first two and last two points, we used first-order forward and backward Euler approximations, respectively. The finite difference scheme used in this work for computing time-derivative data satisfies Assumption \ref{as:2}.
\item All numerical examples are computed with MATLAB version 2020b. The Lyapunov equations \eqref{eq:lyapq}--\eqref{eq:lyapp} are solved using the in-built \texttt{lyap} function in MATLAB.
\item All the state error plots in this section compute the following relative error 
\begin{equation}
\frac{|| \Y - \V \hat{\Y} ||_2}{|| \Y||_2}
\label{eq:state_error}
\end{equation}
where $\hat{\Y}$ is either obtained from nonintrusive Hamiltonian ROM or intrusive Hamiltonian ROM. For state approximation error in training data, we only consider trajectories up to the training time interval and for test data plots, we consider trajectories from the full ROM simulation. 
\item All FOM energy error plots in this section compute the following error
\begin{equation}
    |\dH(\V\hat{\y}(t))-\dH(\V\hat{\y}(0)|
    \label{eq:FOM_energy}
\end{equation}
where $\hat{\y}$ is either obtained from nonintrusive Hamiltonian ROM or intrusive Hamiltonian ROM. For ROM energy error plots, we compare $|\hat{H}(\hat{\y}(t))-\hat{H}(\hat{\y}(0)|$ for nonintrusive Hamiltonian ROMs of different sizes.
\end{itemize}
%
%
\subsection{Linear Wave Equation}
\label{s:42} 
\subsubsection{Finite Difference Discretization}
\label{s:421} 
We revisit the linear wave example from Section~\ref{s:31} with finite difference spatial discretization. Using the H-OpInf framework, we infer symmetric reduced operators $\Dhatq=\Dhatq^\top \in \Real^{r \times r}$ for $2r=40$ using Algorithm \ref{alg}. Nonintrusive Hamiltonian ROMs of size $2w$ with $2w<2r$ are constructed by extracting submatrices of size $w \times w$, corresponding to the first $w$ basis vectors, from $\Dhatq$ and $\Dhatp$. Thus, our structure-preserving approach requires performing H-OpInf with Algorithm \ref{alg} only once.
\par
Figure \ref{fig:LW_state} compares the errors of the intrusive Hamiltonian ROMs and nonintrusive Hamiltonian ROMs over the training time interval of $T=10$. Compared with Figure \ref{fig: stacked_state}, we observe that the state errors shown in Figure \ref{fig:LW_state} don't display oscillations. The nonintrusive Hamiltonian ROM shows a similar behavior to the intrusive Hamiltonian ROMs for the linear wave example up to $2r=32$. However, as $2r$ increases further, the error for the nonintrusive Hamiltonian ROMs levels off. This leveling off of the state error is because the projected trajectories correspond to non-Markovian dynamics in the reduced setting even though the FOM state trajectories and the corresponding FOM dynamics are Markovian, see \cite{peherstorfer2020sampling}. Figure \ref{fig:LW_state_test} compares the state approximation error over the testing time interval of $T=100$ where both Hamiltonian ROMs demonstrate similar error up to $2r=20$. For $2r>20$, the intrusive Hamiltonian ROM errors decrease more rapidly compared to nonintrusive Hamiltonian ROM.

\begin{figure}
\captionsetup[subfigure]{oneside,margin={1.5cm,0 cm}}
\begin{subfigure}{.38\textwidth}
       \setlength\fheight{4 cm}
        \setlength\fwidth{\textwidth}
%
%
\definecolor{mycolor1}{rgb}{0.63529,0.07843,0.18431}%
\definecolor{mycolor2}{rgb}{0.85098,0.32549,0.09804}%
\begin{tikzpicture}

\begin{axis}[%
width=0.974\fwidth,
height=\fheight,
at={(0\fwidth,0\fheight)},
scale only axis,
xmin=0,
xmax=40,
xlabel style={font=\color{white!15!black}},
xlabel={\small Reduced Dimension $2r$},
ymode=log,
ymin=0.001,
ymax=1,
yminorticks=true,
ylabel style={font=\color{white!15!black}},
ylabel={\small Relative State Error},
axis background/.style={fill=white},
xmajorgrids,
ymajorgrids,
legend style={at={(0.375,0.75)}, anchor=south west, legend cell align=left, align=left, draw=white!15!black,font=\footnotesize}
]
\addplot [color=mycolor1, dashdotted,line width=1.5pt, mark=square, mark options={solid, mycolor1}]
  table[row sep=crcr]{%
4	0.736141583751316\\
8	0.430061117347179\\
12	0.107562312697577\\
16	0.0150895754869499\\
20	0.0104224960538857\\
24	0.0067147391238826\\
28	0.00364395804636036\\
32	0.00272882303607118\\
36	0.00193314779330995\\
40	0.00170108730762371\\
};
\addlegendentry{H-OpInf}

\addplot [color=mycolor2, dotted, line width=1.5pt, mark=x, mark options={solid, mycolor2}]
  table[row sep=crcr]{%
4	0.736141370847748\\
8	0.430061095428298\\
12	0.107561337272124\\
16	0.0150799740086443\\
20	0.0104010499576645\\
24	0.00666412431497805\\
28	0.00356984905550367\\
32	0.00261280201423283\\
36	0.00172989235307004\\
40	0.00138680720785551\\
};
\addlegendentry{Intrusive PSD}

\end{axis}

\begin{axis}[%
width=1.257\fwidth,
height=1.257\fheight,
at={(-0.163\fwidth,-0.163\fheight)},
scale only axis,
xmin=0,
xmax=1,
ymin=0,
ymax=1,
axis line style={draw=none},
ticks=none,
axis x line*=bottom,
axis y line*=left
]
\end{axis}
\end{tikzpicture}%
\caption{State error (training data)}
\label{fig:LW_state}
    \end{subfigure}
    \hspace{1.4cm}
    \begin{subfigure}{.38\textwidth}
           \setlength\fheight{4 cm}
           \setlength\fwidth{\textwidth}
%
%
\definecolor{mycolor1}{rgb}{0.63529,0.07843,0.18431}%
\definecolor{mycolor2}{rgb}{0.85098,0.32549,0.09804}%
\begin{tikzpicture}
\begin{axis}[%
width=0.974\fwidth,
height=\fheight,
at={(0\fwidth,0\fheight)},
scale only axis,
xmin=0,
xmax=40,
xlabel style={font=\color{white!15!black}},
xlabel={\small Reduced Dimension $2r$},
ymode=log,
ymin=0.001,
ymax=1,
yminorticks=true,
ylabel style={font=\color{white!15!black}},
ylabel={\small Relative State Error},
axis background/.style={fill=white},
xmajorgrids,
ymajorgrids,
legend style={at={(0.372,0.755)}, anchor=south west, legend cell align=left, align=left, draw=white!15!black,font=\footnotesize}
]
\addplot [color=mycolor1, dashdotted,line width=1.5pt, mark=square, mark options={solid, mycolor1}]
  table[row sep=crcr]{%
4	0.736104087878812\\
8	0.430596286600137\\
12	0.107687626899745\\
16	0.015998363244194\\
20	0.0123407494883214\\
24	0.0110601341413955\\
28	0.00809511739423335\\
32	0.00808532355479226\\
36	0.00832065343606134\\
40	0.00835941160374378\\
};
\addlegendentry{H-OpInf}

\addplot [color=mycolor2, dotted, line width=1.5pt, mark=x, mark options={solid, mycolor2}]
  table[row sep=crcr]{%
4	0.736099236579802\\
8	0.430590043267018\\
12	0.107593543546421\\
16	0.0150708081937535\\
20	0.0103970121659926\\
24	0.0078376396564234\\
28	0.00357719944133545\\
32	0.00268040797157253\\
36	0.00173933132807277\\
40	0.00152538843444477\\
};
\addlegendentry{Intrusive PSD}

\end{axis}

\begin{axis}[%
width=1.257\fwidth,
height=1.257\fheight,
at={(-0.163\fwidth,-0.163\fheight)},
scale only axis,
xmin=0,
xmax=1,
ymin=0,
ymax=1,
axis line style={draw=none},
ticks=none,
axis x line*=bottom,
axis y line*=left
]
\end{axis}
\end{tikzpicture}%
\caption{State error (test data)}
\label{fig:LW_state_test}
    \end{subfigure}
\caption{Linear wave equation (finite difference discretization): nonintrusive Hamiltonian ROMs obtained with H-OpInf presented in Section~\ref{s:33} achieve similar state error performance to that of intrusive Hamiltonian ROMs. For test data, the state approximation error for learned Hamiltonian ROMs does not decrease as favorably with increase in reduced dimension for $2r>20$.}
 \label{fig:LW_state_fd}
\end{figure}
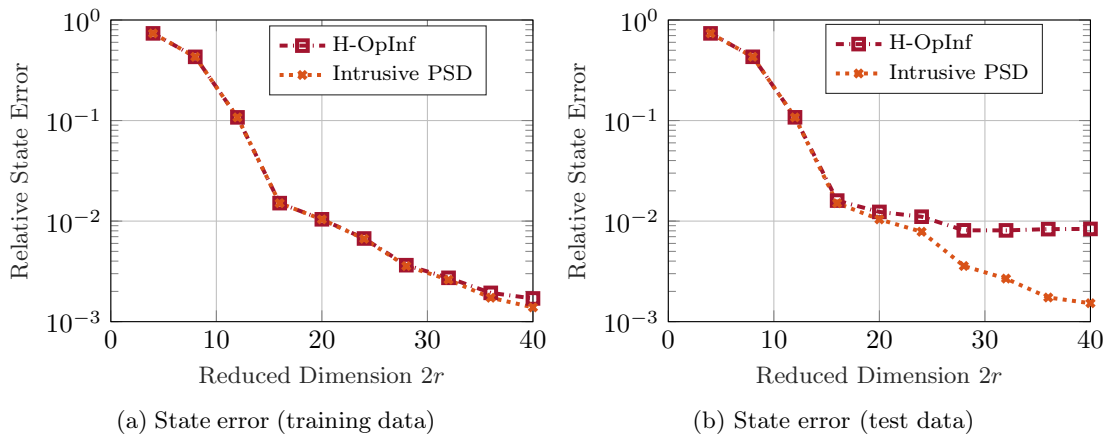

\begin{figure}
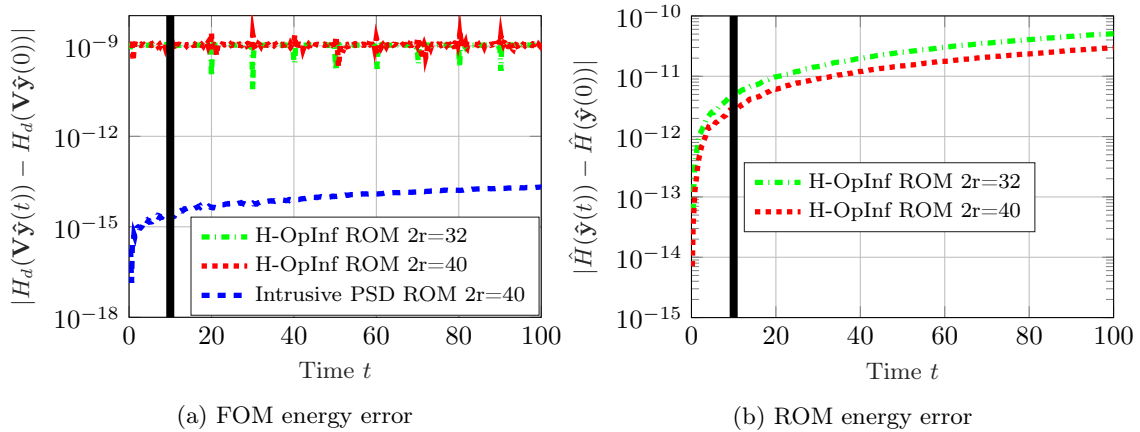

\captionsetup[subfigure]{oneside,margin={2cm,0 cm}}
\begin{subfigure}{.38\textwidth}
       \setlength\fheight{4 cm}
        \setlength\fwidth{\textwidth}
\input{Figures/figures/LW/LW_Hd.tex}
\caption{FOM energy error}
 \label{fig:LW_FOM}
    \end{subfigure}
    \hspace{1.4cm}
    \begin{subfigure}{.38\textwidth}
           \setlength\fheight{4 cm}
           \setlength\fwidth{\textwidth}         
\input{Figures/figures/LW/LW_hrr.tex}
\caption{ROM energy error }
 \label{fig:LW_ROM}
    \end{subfigure}
\caption{Linear wave equation (finite difference discretization): Plot (a) shows bounded FOM energy error around $10^{-9}$ for nonintrusive Hamiltonian ROMs. The intrusive Hamiltonian ROM, on the other hand, shows exact energy preservation due to its specific choice of reduced Hamiltonian. The black line indicates end of training time interval. Plot (b) shows similar ROM energy error for both nonintrusive Hamiltonian ROMs with marginal improvement in the error magnitude for $2r=40$.}
 \label{fig:LW_energy}
\end{figure}

In Figure \ref{fig:LW_FOM}, we compare the FOM energy error for different nonintrusive ROMs. The system is simulated for $T=100$, thus predicting the numerical behavior for $900\%$ past the training interval. Due to its specific choice of reduced Hamiltonian, the intrusive Hamiltonian ROM conserves the energy with the same accuracy as the FOM simulation. The nonintrusive Hamiltonian ROMs exhibit bounded energy error for different ROM sizes due to their Hamiltonian nature. The bounded energy error $900\%$ past the training interval shows that the nonintrusive Hamiltonian ROMs simulate a perturbation of the intrusive Hamiltonian ROM exactly and thus, the FOM energy error for nonintrusive ROMs remains bounded well beyond the training data. This shows a true strength of the proposed H-OpInf, namely that if the structure is respected in every aspect of discretization and the learning method, long-term stable predictions are possible. Figure \ref{fig:LW_ROM} compares ROM energy error for nonintrusive Hamiltonian ROMs where both reduced models demonstrate similar energy error behavior. The bounded energy error plots in Figure~\ref{fig:LW_energy} affirm the Hamiltonian nature of the nonintrusive ROMs for different $r$. Additionally, these nonintrusive ROM energy error plots also serve as an indicator of long-time stability of simulations of our learned ROM.

\begin{remark}
The state error leveling-off for operator inference has been resolved by a sampling scheme based on re-projection in \cite{peherstorfer2020sampling}. If the data is sampled with that scheme, then we recover the intrusive ROMs preasymptotically from data under certain conditions. However, re-projection in its current formulation only works for fully discrete systems with explicit or linearly implicit time-marching schemes. Nonlinear Hamiltonian systems, on the other hand, require fully implicit time integrators to preserve the underlying geometric structure. Thus, the re-projection sampling can not be used for Hamiltonian systems. Extending this algorithm for use with H-OpInf remains a topic of further investigation.
\end{remark}

\subsubsection{Pseudo-spectral Discretization}
\label{s:422}
We now consider the linear wave example with the same setup as in Section \ref{s:31}, except that the linear wave PDE is spatially discretized with the pseudo-spectral method. 
Spatial discretization using the pseudo-spectral method involves two key steps. 
\begin{enumerate}
\item Construct space-discretized representation of the solution through interpolating trigonometric polynomial of the solution at collocation points in the domain.
\item Derive space-discretized FOM equations for the discrete values of the solution  from the PDE by finding an approximation for the differential operator $\partial_{xx}$ in terms of the discrete values from the space-discretized representation. 
\end{enumerate}
The FOM is represented by the following Hamiltonian ODE system
\begin{equation*}
    \dot{\textbf{y}}=\Jn \nabla_{ \textbf{y} }H_{ d}(\textbf{y})=\begin{bmatrix}
       \bzero & \In \\
       -\In & \bzero
     \end{bmatrix} \begin{bmatrix}
       -c^2\mathbf{D}_{\text{ps}} & \bzero \\
       \bzero & \In
     \end{bmatrix}\textbf{y} =\begin{bmatrix}
       \bzero & \In \\
       c^2\mathbf{D}_{\text{ps}} & \bzero
     \end{bmatrix} \textbf{y} ,
\end{equation*}
where $\mathbf{D}_{\text{ps}}$ denotes the pseudo-spectral approximation for $\partial_{xx}$. Note that the FOM equations have the same coupling structure for the pseudo-spectral discretization as in \eqref{eq:LW_FOM} for the finite difference discretization and hence, the same ROM model form will be used for learning the symmetric reduced operators for this case.

The state error plots for the pseudo-spectral discretization case in Figure \ref{fig:LW_pseudo} are similar to the finite difference case. In Figure \ref{fig:LW_pseudo_train}, the state error for the nonintrusive Hamiltonian ROM is nearly same as the intrusive Hamiltonian ROM for $2r\leq30$. However in test data regime, the state error comparison in Figure \ref{fig:LW_pseudo_test} show that the state error for nonintrusive Hamiltonian ROM does not decrease as favorably after $2r=20$, similar to the case in Section \ref{s:421}. In the FOM energy error plots in Figure \ref{fig:LW_pseudo_Hd}, we observe bounded energy error for $900\%$ past the training time interval which shows the stability of the learned Hamiltonian ROMs. The ROM energy error comparison in Figure \ref{fig:LW_pseudo_Hr} shows the energy error eventually stabilizes around $10^{-10}$ for both nonintrusive Hamiltonian ROMs. The ability of H-OpInf to handle different structure-preserving spatial discretizations is the key takeaway from this study. Figure \ref{fig:LW_traj} compares the state vector solution at $t=7$, $t=41.5$, and $t=100$. Both intrusive and nonintrusive Hamiltonian ROMs accurately capture the solution profile, well beyond the training data regime. These results demonstrate the ability of the learned Hamiltonian ROM to provide reliable long-time predictions.

\begin{figure}
\captionsetup[subfigure]{oneside,margin={1.5cm,0 cm}}
\begin{subfigure}{.38\textwidth}
       \setlength\fheight{4 cm}
        \setlength\fwidth{\textwidth}
%
%
\definecolor{mycolor1}{rgb}{0.63529,0.07843,0.18431}%
\definecolor{mycolor2}{rgb}{0.85098,0.32549,0.09804}%
\begin{tikzpicture}

\begin{axis}[%
width=0.974\fwidth,
height=\fheight,
at={(0\fwidth,0\fheight)},
scale only axis,
xmin=0.0268817204301053,
xmax=40.0268817204301,
xlabel style={font=\color{white!15!black}},
xlabel={\small Reduced Dimension $2r$},
ymode=log,
ymin=0.001,
ymax=1,
yminorticks=true,
ylabel style={font=\color{white!15!black}},
ylabel={\small Relative State Error},
axis background/.style={fill=white},
xmajorgrids,
ymajorgrids,
legend style={at={(0.341,0.642)}, anchor=south west, legend cell align=left, align=left, draw=white!15!black,font=\footnotesize}
]
\addplot [color=mycolor1, dashdotted,line width=1.5pt, mark=square, mark options={solid, mycolor1}]
  table[row sep=crcr]{%
4	0.736153490267292\\
8	0.430079819267268\\
12	0.10757447712037\\
16	0.0151011121333136\\
20	0.0104303578832355\\
24	0.00665670971243451\\
28	0.00365759435228332\\
32	0.00273477212695122\\
36	0.00194887400049198\\
40	0.00165805856696717\\
};
\addlegendentry{H-OpInf}

\addplot [color=mycolor2, dotted, line width=1.5pt, mark=x, mark options={solid, mycolor2}]
  table[row sep=crcr]{%
4	0.736153269156566\\
8	0.430079815439364\\
12	0.107573527667914\\
16	0.0150914913322116\\
20	0.0104087547436695\\
24	0.0066169135444112\\
28	0.0035816989611912\\
32	0.00261559038167936\\
36	0.00174088963370737\\
40	0.00135736395732388\\
};
\addlegendentry{Intrusive PSD}

\end{axis}

\begin{axis}[%
width=1.257\fwidth,
height=1.257\fheight,
at={(-0.163\fwidth,-0.163\fheight)},
scale only axis,
xmin=0,
xmax=1,
ymin=0,
ymax=1,
axis line style={draw=none},
ticks=none,
axis x line*=bottom,
axis y line*=left
]
\end{axis}
\end{tikzpicture}%
\caption{State error (training data)}
 \label{fig:LW_pseudo_train}
    \end{subfigure}
    \hspace{1.4cm}
    \begin{subfigure}{.38\textwidth}
           \setlength\fheight{4 cm}
           \setlength\fwidth{\textwidth}
%
%
\definecolor{mycolor1}{rgb}{0.63529,0.07843,0.18431}%
\definecolor{mycolor2}{rgb}{0.85098,0.32549,0.09804}%
\begin{tikzpicture}

\begin{axis}[%
width=0.974\fwidth,
height=\fheight,
at={(0\fwidth,0\fheight)},
scale only axis,
xmin=0,
xmax=40,
xlabel style={font=\color{white!15!black}},
xlabel={\small Reduced Dimension $2r$},
ymode=log,
ymin=0.001,
ymax=1,
yminorticks=true,
ylabel style={font=\color{white!15!black}},
ylabel={\small Relative State Error},
axis background/.style={fill=white},
xmajorgrids,
ymajorgrids,
legend style={at={(0.297,0.738)}, anchor=south west, legend cell align=left, align=left, draw=white!15!black,font=\footnotesize}
]
\addplot [color=mycolor1, dashdotted,line width=1.5pt, mark=square, mark options={solid, mycolor1}]
  table[row sep=crcr]{%
4	0.736117592580442\\
8	0.431017184693097\\
12	0.107702182652236\\
16	0.0160248006146913\\
20	0.0123658493935255\\
24	0.00984025001290515\\
28	0.00818262858752999\\
32	0.00816096305652389\\
36	0.00838970488418691\\
40	0.00843601656701927\\
};
\addlegendentry{H-OpInf}

\addplot [color=mycolor2, dotted, line width=1.5pt, mark=x, mark options={solid, mycolor2}]
  table[row sep=crcr]{%
4	0.73611221438343\\
8	0.430863205082545\\
12	0.107607898223832\\
16	0.0150950884935087\\
20	0.0104108685398884\\
24	0.00663514852484815\\
28	0.00358067812877152\\
32	0.00261500795489447\\
36	0.00174035142090659\\
40	0.00139437977459913\\
};
\addlegendentry{Intrusive PSD}

\end{axis}

\begin{axis}[%
width=1.257\fwidth,
height=1.257\fheight,
at={(-0.163\fwidth,-0.163\fheight)},
scale only axis,
xmin=0,
xmax=1,
ymin=0,
ymax=1,
axis line style={draw=none},
ticks=none,
axis x line*=bottom,
axis y line*=left
]
\end{axis}
\end{tikzpicture}%
\caption{State error (test data)}
 \label{fig:LW_pseudo_test}
    \end{subfigure}
\caption{Linear wave equation (pseudo-spectral discretization): Plots show that H-OpInf is able to produce accurate nonintrusive Hamiltonian ROMs even if the PDE is discretized with pseudo-spectral method in space.}
 \label{fig:LW_pseudo}
\end{figure}
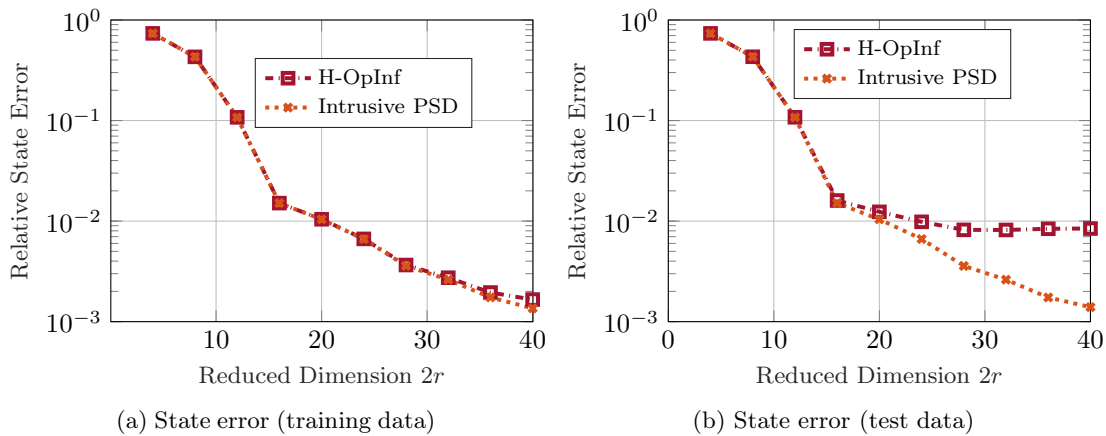

\begin{figure}
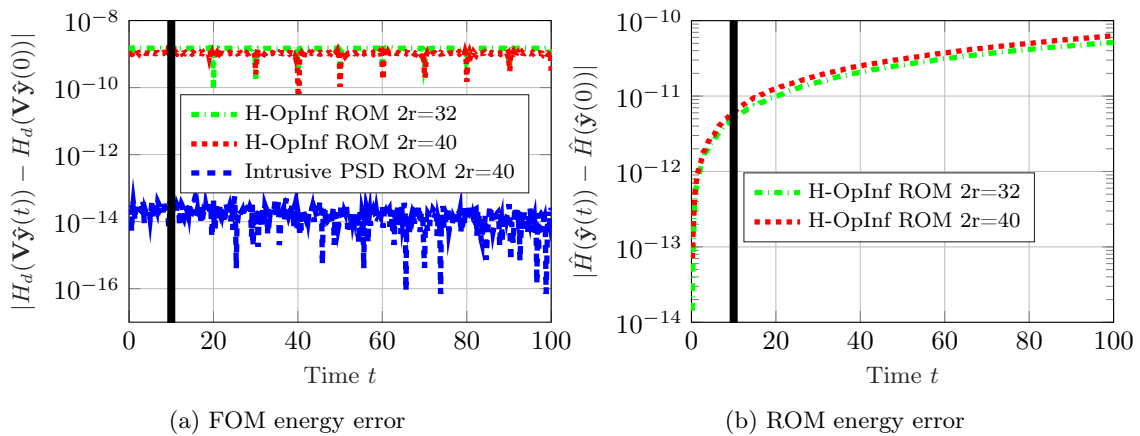

\captionsetup[subfigure]{oneside,margin={1.8cm,0 cm}}
\begin{subfigure}{.38\textwidth}
       \setlength\fheight{4 cm}
        \setlength\fwidth{\textwidth}
\input{Figures/figures/LW/LW_pseudo_hdd.tex}
\caption{FOM energy error}
 \label{fig:LW_pseudo_Hd}
    \end{subfigure}
    \hspace{1.4cm}
    \begin{subfigure}{.38\textwidth}
           \setlength\fheight{4 cm}
           \setlength\fwidth{\textwidth}
\input{Figures/figures/LW/LW_pseudo_hrr.tex}
\caption{ROM energy error }
 \label{fig:LW_pseudo_Hr}
    \end{subfigure}
\caption{Linear wave equation (pseudo-spectral discretization): Plot (a) shows bounded FOM energy error for both nonintrusive Hamiltonian ROMs with marginally lower error for $2r=40$. The black line indicates end of training time interval. Plot (b) shows similar ROM energy error for nonintrusive ROMs.}
 \label{fig:LW_pseudo_energy}
\end{figure}


\begin{figure}
\captionsetup[subfigure]{oneside,margin={1.8cm,0 cm}}
\begin{subfigure}{.25\textwidth}
       \setlength\fheight{3 cm}
        \setlength\fwidth{\textwidth}
\raisebox{-51mm}{\input{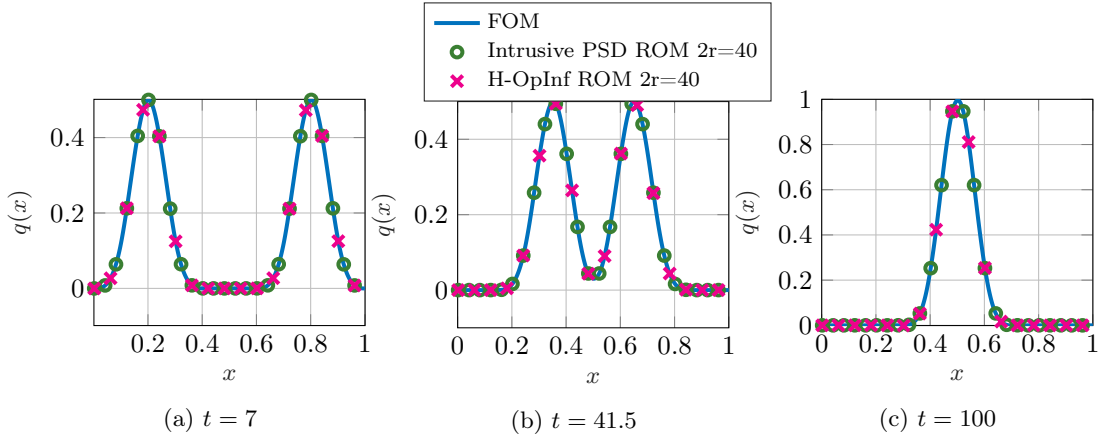}}
\subcaption{$t=7$ }
    \end{subfigure}
   \hspace{0.8cm}
    \begin{subfigure}{.25\textwidth}
           \setlength\fheight{3 cm}
           \setlength\fwidth{\textwidth}
%
%
\definecolor{mycolor1}{rgb}{0.00000,0.44700,0.74100}%
\definecolor{mycolor2}{rgb}{0.85000,0.32500,0.09800}%
\definecolor{mycolor3}{rgb}{0.92900,0.69400,0.12500}%
\begin{tikzpicture}

\begin{axis}[%
width=0.951\fwidth,
height=\fheight,
at={(0\fwidth,0\fheight)},
scale only axis,
xmin=0,
xmax=1,
xlabel style={font=\color{white!15!black}},
xlabel={\small $x$},
ymin=-0.1,
ymax=0.5,
ylabel style={font=\color{white!15!black}},
ylabel={ \small $q(x)$},
axis background/.style={fill=white},
xmajorgrids,
ymajorgrids,
legend style={at={(-0.125,1)}, anchor=south west, legend cell align=left, align=left, draw=white!15!black,font=\footnotesize}
]
\addplot [color=mycolor1,line width=1.5 pt]
  table[row sep=crcr]{%
0.002	-4.45465552267876e-06\\
0.0660000000000001	-1.11058444995926e-06\\
0.138	4.59245564587984e-05\\
0.148	0.000171315950925344\\
0.154	0.000319895968608597\\
0.16	0.000600360763952645\\
0.164	0.000895561283250634\\
0.168	0.00130706570766215\\
0.172	0.00186972481562475\\
0.176	0.00263572822207236\\
0.178	0.00311172877248467\\
0.18	0.00365719659090957\\
0.182	0.00427808298148746\\
0.184	0.00498202709241591\\
0.186	0.0057779541272005\\
0.188	0.00667404090132262\\
0.19	0.00767812776598342\\
0.192	0.00879611093738086\\
0.194	0.0100342514978422\\
0.196	0.0114004208366019\\
0.198	0.0129019730760547\\
0.2	0.0145466523610553\\
0.202	0.0163438778382636\\
0.204	0.0182988452556136\\
0.206	0.0204121009278488\\
0.208	0.0226869703913377\\
0.21	0.0251295408005987\\
0.212	0.0277478916018816\\
0.214	0.0305481017079217\\
0.216	0.0335358593568058\\
0.218	0.0367154030750689\\
0.22	0.0400898385371125\\
0.222	0.0436611491042709\\
0.224	0.0474345162502712\\
0.226	0.0514171337323346\\
0.228	0.0556154785567589\\
0.23	0.0600367999948168\\
0.232	0.0646880200124356\\
0.234	0.0695766064476289\\
0.236	0.0747071302962174\\
0.238	0.0800808350538589\\
0.24	0.0856981430934254\\
0.242	0.0915603108469787\\
0.244	0.0976737169193649\\
0.246	0.104045625239801\\
0.248	0.110679428824641\\
0.25	0.117576235033136\\
0.252	0.124733696405859\\
0.254	0.132146696447929\\
0.258	0.147715728510862\\
0.262	0.164231641900501\\
0.266	0.181642042031121\\
0.27	0.199882106643531\\
0.274	0.218843557813987\\
0.278	0.23837189688711\\
0.284	0.268350282125583\\
0.294	0.318560128920801\\
0.298	0.338173984598325\\
0.302	0.357259732401369\\
0.306	0.375669596437436\\
0.31	0.3932897947618\\
0.314	0.409996805317738\\
0.316	0.417971045879204\\
0.318	0.425674956042231\\
0.32	0.433095013949342\\
0.322	0.440217618635127\\
0.324	0.447026105415486\\
0.326	0.453502306273931\\
0.328	0.459627916791305\\
0.33	0.465383703045043\\
0.332	0.470750977505181\\
0.334	0.475710528676395\\
0.336	0.480243463336486\\
0.338	0.484333617689919\\
0.34	0.487965433665964\\
0.342	0.491123443938937\\
0.344	0.493788230346478\\
0.346	0.49594212912108\\
0.348	0.497572802649048\\
0.35	0.498671967762019\\
0.352	0.499236240218769\\
0.354	0.49926623655108\\
0.356	0.498764834883454\\
0.358	0.497736633682589\\
0.36	0.496185275524734\\
0.362	0.494112763069726\\
0.364	0.491519167657522\\
0.366	0.488404032628351\\
0.368	0.484767397906323\\
0.37	0.480613676726809\\
0.372	0.475952778909208\\
0.374	0.470800677014863\\
0.376	0.465177159367282\\
0.378	0.459105430436779\\
0.38	0.452610740395824\\
0.382	0.445722210467351\\
0.384	0.438471131264372\\
0.386	0.430885304395621\\
0.388	0.422990774189769\\
0.392	0.406390065660694\\
0.396	0.388887850095115\\
0.4	0.370670455238662\\
0.404	0.351899812127683\\
0.41	0.322985216185677\\
0.418	0.283573735811282\\
0.43	0.224201970307628\\
0.436	0.195249755872118\\
0.44	0.176592650132735\\
0.444	0.158685336491611\\
0.448	0.141709157955012\\
0.45	0.133606934724946\\
0.452	0.125769915621076\\
0.454	0.118200047212754\\
0.456	0.110898104098334\\
0.458	0.103864372485537\\
0.46	0.0971035972300798\\
0.462	0.0906247618988263\\
0.464	0.0844406404220139\\
0.466	0.0785672880066117\\
0.468	0.0730224033816473\\
0.47	0.0678211791055121\\
0.472	0.0629740048511191\\
0.474	0.0584874550160051\\
0.476	0.0543625174682034\\
0.478	0.0505952561150975\\
0.48	0.0471795737834788\\
0.482	0.0441055638216314\\
0.484	0.0413609676212336\\
0.486	0.0389361769947316\\
0.488	0.0368230875202444\\
0.49	0.0350122180601364\\
0.492	0.0334944566317859\\
0.494	0.0322629938839405\\
0.496	0.0313114629598068\\
0.498	0.0306355382576866\\
0.5	0.0302327310652117\\
0.502	0.0300993388685031\\
0.504	0.0302327310674868\\
0.506	0.0306355382482739\\
0.508	0.0313114629598641\\
0.51	0.0322629938820613\\
0.512	0.0334944566292883\\
0.514	0.0350122180563595\\
0.516	0.0368230875182753\\
0.518	0.0389361769980276\\
0.52	0.041360967619962\\
0.522	0.0441055638255603\\
0.524	0.047179573779035\\
0.526	0.0505952561191965\\
0.528	0.0543625174603897\\
0.53	0.058487455013722\\
0.532	0.0629740048556025\\
0.534	0.0678211791023642\\
0.536	0.0730224033788902\\
0.538	0.0785672880113537\\
0.54	0.0844406404160623\\
0.542	0.0906247618908309\\
0.544	0.0971035972329528\\
0.546	0.103864372479766\\
0.548	0.110898104094512\\
0.55	0.118200047205483\\
0.552	0.125769915613555\\
0.554	0.13360693472301\\
0.558	0.15007215414054\\
0.562	0.167532561975221\\
0.566	0.185840287287033\\
0.57	0.204795529774983\\
0.576	0.234018761317031\\
0.598	0.342349330296619\\
0.602	0.361344685078606\\
0.606	0.379858364509984\\
0.61	0.397738921977024\\
0.614	0.414816277284258\\
0.616	0.422990774191146\\
0.618	0.430885304401925\\
0.62	0.438471131261796\\
0.622	0.445722210469583\\
0.624	0.452610740396408\\
0.626	0.45910543043934\\
0.628	0.46517715936958\\
0.63	0.470800677019582\\
0.632	0.475952778911257\\
0.634	0.480613676725908\\
0.636	0.48476739791168\\
0.638	0.488404032632923\\
0.64	0.491519167654712\\
0.642	0.494112763071596\\
0.644	0.496185275524227\\
0.646	0.497736633680916\\
0.648	0.498764834891234\\
0.65	0.499266236552846\\
0.652	0.499236240219337\\
0.654	0.498671967755456\\
0.656	0.497572802641944\\
0.658	0.495942129127369\\
0.66	0.493788230343967\\
0.662	0.491123443941228\\
0.664	0.487965433664965\\
0.666	0.484333617686451\\
0.668	0.480243463338374\\
0.67	0.475710528678956\\
0.672	0.470750977506297\\
0.674	0.465383703046514\\
0.676	0.459627916787919\\
0.678	0.453502306270693\\
0.68	0.447026105409411\\
0.682	0.440217618633517\\
0.684	0.433095013948542\\
0.686	0.425674956034125\\
0.688	0.417971045870691\\
0.69	0.409996805316114\\
0.694	0.393289794760632\\
0.698	0.375669596429987\\
0.702	0.357259732395542\\
0.706	0.338173984597526\\
0.71	0.31856012891985\\
0.718	0.278430682975277\\
0.726	0.238371896886147\\
0.73	0.218843557811645\\
0.734	0.19988210664405\\
0.738	0.181642042025895\\
0.742	0.164231641903346\\
0.746	0.147715728506104\\
0.75	0.132146696445414\\
0.752	0.124733696403231\\
0.754	0.117576235036299\\
0.756	0.110679428820842\\
0.758	0.104045625236786\\
0.76	0.0976737169184609\\
0.762	0.0915603108420668\\
0.764	0.085698143100279\\
0.766	0.0800808350539257\\
0.768	0.074707130293878\\
0.77	0.0695766064562953\\
0.772	0.0646880200093565\\
0.774	0.0600367999983087\\
0.776	0.0556154785580372\\
0.778	0.0514171337346701\\
0.78	0.0474345162515595\\
0.782	0.0436611491105874\\
0.784	0.0400898385382951\\
0.786	0.0367154030711971\\
0.788	0.0335358593608939\\
0.79	0.0305481017078573\\
0.792	0.0277478916087366\\
0.794	0.0251295407938179\\
0.796	0.0226869703916455\\
0.798	0.0204121009297431\\
0.8	0.0182988452617825\\
0.802	0.016343877847222\\
0.804	0.0145466523586251\\
0.806	0.0129019730768265\\
0.808	0.011400420842427\\
0.81	0.0100342515008209\\
0.812	0.00879611094221788\\
0.814	0.007678127771928\\
0.816	0.00667404090931445\\
0.818	0.00577795413176907\\
0.82	0.00498202709457884\\
0.822	0.00427808298833354\\
0.824	0.00365719659317021\\
0.826	0.00311172877413979\\
0.83	0.00222386765292293\\
0.834	0.00156678078188155\\
0.838	0.00108490664180461\\
0.842	0.000735168804374853\\
0.846	0.0004881183715848\\
0.852	0.000259215256032785\\
0.862	7.93595209676035e-05\\
0.872	2.92065145492959e-05\\
0.898	-3.75669562235892e-06\\
0.934	9.6202467769757e-08\\
1	-2.443334000235e-06\\
};
\addlegendentry{FOM}

\addplot [color=OliveGreen, only marks, line width=1.5pt, mark size=2pt,mark=o, mark options={solid, OliveGreen}]
  table[row sep=crcr]{%
0.002	-2.47884094873863e-05\\
0.042	2.25326041918272e-05\\
0.082	-4.02541618242935e-05\\
0.122	1.34583284687739e-05\\
0.162	0.000791672006668631\\
0.202	0.0163010658545134\\
0.242	0.0915532540893963\\
0.282	0.258315126373318\\
0.322	0.440209679158841\\
0.362	0.494099937716344\\
0.402	0.361346621810089\\
0.442	0.167608113297041\\
0.482	0.0440782743788165\\
0.522	0.044078274377325\\
0.562	0.167608113295633\\
0.602	0.361346621806567\\
0.642	0.49409993771551\\
0.682	0.440209679158034\\
0.722	0.258315126372708\\
0.762	0.0915532540887692\\
0.802	0.0163010658517336\\
0.842	0.00079167200489727\\
0.882	1.34583274689071e-05\\
0.922	-4.02541635500242e-05\\
0.962	2.25326025518058e-05\\
};
\addlegendentry{Intrusive PSD ROM 2r=40}

\addplot [color=magenta, only marks,line width=1.5pt, mark size=3pt, mark=x, mark options={solid, magenta}]
  table[row sep=crcr]{%
0.002	-2.78062168823645e-06\\
0.0620000000000001	1.61071080801989e-05\\
0.122	1.83376673889013e-05\\
0.182	0.00460877506015966\\
0.242	0.09165567523556\\
0.302	0.356493922828862\\
0.362	0.4943392307183\\
0.422	0.26439235551337\\
0.482	0.0434350165923812\\
0.542	0.0903093113977355\\
0.602	0.362153223589279\\
0.662	0.490508721842606\\
0.722	0.257277943720139\\
0.782	0.0439135367777508\\
0.842	0.000931616031092153\\
0.902	-1.43399962572177e-05\\
0.962	4.926980308817e-05\\
};
\addlegendentry{H-OpInf ROM 2r=40}

\end{axis}

\begin{axis}[%
width=1.227\fwidth,
height=1.227\fheight,
at={(-0.16\fwidth,-0.135\fheight)},
scale only axis,
xmin=0,
xmax=1,
ymin=0,
ymax=1,
axis line style={draw=none},
ticks=none,
axis x line*=bottom,
axis y line*=left
]
\end{axis}
\end{tikzpicture}%
\subcaption{$t=41.5$}
    \end{subfigure}
    \hspace{0.8 cm}
        \begin{subfigure}{.25\textwidth}
           \setlength\fheight{3 cm}
           \setlength\fwidth{\textwidth}
\raisebox{-51mm}{
%
%
\definecolor{mycolor1}{rgb}{0.00000,0.44700,0.74100}%
\definecolor{mycolor2}{rgb}{0.85000,0.32500,0.09800}%
\definecolor{mycolor3}{rgb}{0.92941,0.69412,0.12549}%
\begin{tikzpicture}

\begin{axis}[%
width=0.951\fwidth,
height=\fheight,
at={(0\fwidth,0\fheight)},
scale only axis,
xmin=0,
xmax=1,
xlabel style={font=\color{white!15!black}},
xlabel={\small $x$},
ymin=-0.00101561880334378,
ymax=1,
ylabel style={font=\color{white!15!black}},
ylabel={\small $q(x)$},
axis background/.style={fill=white},
xmajorgrids,
ymajorgrids
]
\addplot [color=mycolor1,line width=1.5pt]
  table[row sep=crcr]{%
0.002	-1.42927299113804e-05\\
0.0820000000000001	-1.59096070795339e-05\\
0.0920000000000001	9.58681220097901e-06\\
0.1	1.97440683704642e-05\\
0.112	-3.43686525718923e-05\\
0.126	4.58246371248805e-05\\
0.14	-6.83070866276392e-05\\
0.162	4.2817819976726e-05\\
0.176	-6.87308499522921e-05\\
0.194	3.36843014641097e-05\\
0.21	7.10984143972304e-05\\
0.224	-0.000143527430585655\\
0.234	-2.69444692670984e-05\\
0.242	3.15703710012105e-05\\
0.256	2.31463789890363e-05\\
0.262	0.000182201829593831\\
0.272	0.000484765091325112\\
0.276	0.000481500833263482\\
0.28	0.000378702321230051\\
0.284	0.000169147548188864\\
0.29	-0.000289794519419129\\
0.296	-0.000728983604105116\\
0.3	-0.000903850937605588\\
0.304	-0.000920728461146103\\
0.306	-0.000850344279618165\\
0.308	-0.000715886735828342\\
0.31	-0.000508981718135404\\
0.312	-0.000226059201110962\\
0.314	0.00013487612699592\\
0.316	0.000574130893770119\\
0.318	0.00109170515957935\\
0.32	0.00169175534001509\\
0.322	0.00238037955929005\\
0.324	0.00316567270850876\\
0.326	0.00405780240740161\\
0.328	0.00506901622960765\\
0.33	0.00621236719328455\\
0.332	0.00749882174420091\\
0.334	0.00893829399528645\\
0.336	0.0105429673211723\\
0.338	0.0123276639766321\\
0.34	0.0143087592749727\\
0.342	0.0165005974273544\\
0.344	0.0189149233362267\\
0.346	0.0215644016065752\\
0.348	0.0244604016258407\\
0.35	0.0276169590292972\\
0.352	0.0310490789567464\\
0.354	0.0347698548671354\\
0.356	0.0387915612063239\\
0.358	0.0431300586631056\\
0.36	0.0478027984751317\\
0.362	0.0528305627116377\\
0.364	0.0582363669415873\\
0.366	0.0640441350931031\\
0.368	0.0702758942562165\\
0.37	0.0769536321503652\\
0.372	0.0840976133380784\\
0.374	0.0917254703526016\\
0.376	0.0998519606843347\\
0.378	0.108488738831698\\
0.38	0.117647622673587\\
0.382	0.127338002783798\\
0.384	0.137565787446372\\
0.386	0.148334134020098\\
0.388	0.159640765726282\\
0.39	0.17147845459915\\
0.392	0.183834984017808\\
0.394	0.196696439470325\\
0.396	0.210052444080707\\
0.398	0.223895670725228\\
0.4	0.238216453103429\\
0.404	0.268240185692477\\
0.408	0.30001358769538\\
0.412	0.333413293277636\\
0.416	0.368324806335609\\
0.42	0.404633960859771\\
0.424	0.442187239600672\\
0.428	0.480780000210616\\
0.434	0.540089624577797\\
0.448	0.680054344606253\\
0.452	0.7189157076217\\
0.456	0.75659848616103\\
0.46	0.792724121907824\\
0.464	0.82690137833357\\
0.466	0.843136549582234\\
0.468	0.858739792921089\\
0.47	0.873666330379462\\
0.472	0.88787407120254\\
0.474	0.901324165398034\\
0.476	0.91398141103093\\
0.478	0.925813711897728\\
0.48	0.936790679516392\\
0.482	0.946884347806888\\
0.484	0.956069594107251\\
0.486	0.9643251770946\\
0.488	0.971635117062165\\
0.49	0.9779886129341\\
0.492	0.983379001442884\\
0.494	0.9878009247568\\
0.496	0.991248771730142\\
0.498	0.993716286470883\\
0.5	0.995198829463382\\
0.502	0.99569339403726\\
0.504	0.995198829457118\\
0.506	0.993716286468935\\
0.508	0.991248771730699\\
0.51	0.987800924756266\\
0.512	0.983379001442114\\
0.514	0.977988612926051\\
0.516	0.971635117064366\\
0.518	0.964325177103412\\
0.52	0.95606959410969\\
0.522	0.946884347807633\\
0.524	0.93679067951929\\
0.526	0.925813711905679\\
0.528	0.913981411027877\\
0.53	0.901324165400457\\
0.532	0.887874071200792\\
0.534	0.873666330377258\\
0.536	0.858739792924805\\
0.538	0.843136549588101\\
0.54	0.826901378335536\\
0.542	0.810081070093362\\
0.546	0.774880006587961\\
0.55	0.737927846412358\\
0.554	0.699608802521814\\
0.56	0.640390982848841\\
0.576	0.480780000214736\\
0.58	0.442187239605853\\
0.584	0.404633960869684\\
0.588	0.368324806352103\\
0.592	0.333413293274371\\
0.596	0.300013587693166\\
0.6	0.268240185689619\\
0.604	0.238216453103385\\
0.606	0.223895670716779\\
0.608	0.210052444080678\\
0.61	0.196696439464922\\
0.612	0.18383498400996\\
0.614	0.171478454605793\\
0.616	0.159640765735159\\
0.618	0.148334134027222\\
0.62	0.137565787451076\\
0.622	0.127338002786078\\
0.624	0.117647622678417\\
0.626	0.108488738837275\\
0.628	0.099851960681729\\
0.63	0.0917254703697565\\
0.632	0.0840976133419165\\
0.634	0.0769536321532636\\
0.636	0.0702758942482511\\
0.638	0.064044135084792\\
0.64	0.0582363669537376\\
0.642	0.0528305627105718\\
0.644	0.0478027984736678\\
0.646	0.0431300586607211\\
0.648	0.0387915612103831\\
0.65	0.0347698548696211\\
0.652	0.031049078952029\\
0.654	0.0276169590263493\\
0.656	0.024460401621591\\
0.658	0.0215644016066747\\
0.66	0.0189149233330062\\
0.662	0.0165005974393826\\
0.664	0.0143087592797673\\
0.666	0.0123276639756891\\
0.668	0.010542967327243\\
0.67	0.00893829399058044\\
0.672	0.00749882174535399\\
0.674	0.00621236719346174\\
0.676	0.00506901622339462\\
0.678	0.00405780240662734\\
0.68	0.00316567270445489\\
0.682	0.00238037955578707\\
0.684	0.00169175533955657\\
0.686	0.00109170515434753\\
0.688	0.000574130905107051\\
0.69	0.000134876128015993\\
0.692	-0.000226059206025697\\
0.694	-0.000508981721830226\\
0.696	-0.000715886741249783\\
0.698	-0.000850344281770221\\
0.702	-0.000935826192373312\\
0.706	-0.000831766912211762\\
0.71	-0.000601326146967729\\
0.724	0.000378702317243462\\
0.728	0.000481500830084025\\
0.732	0.000484765087430006\\
0.738	0.000327978096055004\\
0.748	2.31463773765483e-05\\
0.754	-6.77663366732695e-06\\
0.768	-5.56947931551477e-07\\
0.782	-0.000141745237317359\\
0.788	-5.42885673482374e-05\\
0.796	9.44514769278815e-05\\
0.804	7.51660522420039e-05\\
0.828	-6.87308589868429e-05\\
0.836	-3.11252517364302e-05\\
0.848	7.3351193880189e-05\\
0.856	1.77152902145483e-05\\
0.866	-7.01404178895526e-05\\
0.876	2.30815761033121e-05\\
0.882	6.14091801827144e-05\\
0.902	4.2062827789735e-06\\
0.91	2.37678032068978e-05\\
0.922	-1.5909612993692e-05\\
0.942	-3.63012471682467e-06\\
0.984	-6.413066982347e-06\\
0.994	8.78331140929944e-06\\
1	-1.2805777326319e-05\\
};

\addplot [color=OliveGreen, only marks, line width=1.5pt, mark size=2pt,mark=o, mark options={solid, OliveGreen}]
  table[row sep=crcr]{%
0.002	8.24726835892964e-05\\
0.042	-6.10067156190386e-05\\
0.082	-1.18007731355618e-05\\
0.122	7.57433388677287e-05\\
0.162	3.9985074635096e-05\\
0.202	0.000118704746789877\\
0.242	-6.92466549101489e-07\\
0.282	0.000288208606869511\\
0.322	0.0023709212026235\\
0.362	0.0527884450095474\\
0.402	0.252985533063144\\
0.442	0.620458920391467\\
0.482	0.946874556533289\\
0.522	0.946874556534154\\
0.562	0.620458920392526\\
0.602	0.252985533064428\\
0.642	0.0527884450078367\\
0.682	0.00237092119851623\\
0.722	0.00028820860401424\\
0.762	-6.92468510310462e-07\\
0.802	0.000118704745299514\\
0.842	3.99850729221329e-05\\
0.882	7.57433389138029e-05\\
0.922	-1.18007720529834e-05\\
0.962	-6.10067135533576e-05\\
};

\addplot [color=magenta, only marks, line width=1.5pt, mark size=3pt,mark=x, mark options={solid, magenta}]
  table[row sep=crcr]{%
0.002	2.46017099089224e-05\\
0.0620000000000001	-1.01191098940134e-05\\
0.122	-4.59829194542349e-05\\
0.182	7.83537655024702e-05\\
0.242	-0.00016197618572511\\
0.302	-0.00101561880334378\\
0.362	0.0518462384238465\\
0.422	0.42315947057732\\
0.482	0.946527563536019\\
0.542	0.810987413190171\\
0.602	0.253701804287212\\
0.662	0.0166511133910044\\
0.722	-0.000178702224963523\\
0.782	6.62003994933436e-06\\
0.842	-0.00013775136779115\\
0.902	8.98404683060683e-05\\
0.962	-5.96384329919486e-05\\
};

\end{axis}
\end{tikzpicture}
\subcaption{$t=100$}
    \end{subfigure}
    \caption{Linear wave equation (pseudo-spectral discretization): Plots show the numerical approximation of the linear wave equation using low-dimensional ($2r=40$) intrusive and nonintrusive Hamiltonian ROM at different $t$ values. Despite higher state approximation error compared to intrusive Hamiltonian ROM in the test data regime, the nonintrusive Hamiltonian ROM captures the correct wave form at $t=100$, which is remarkable because after $t=10$ the Hamiltonian ROM simulations are purely predictive. }
\label{fig:LW_traj}
\end{figure}

\subsection{Nonlinear Schr\"{o}dinger Equation}
\label{s:43} 
The nonlinear Schr\"odinger equation (NLSE) is one of the most important integrable Hamiltonian PDEs, and it is used in a wide variety of wave phenomena in physics, including nonlinear optics, gravity waves, Langmuir waves, quantum mechanics, and condensed matter physics. The one-dimensional NLSE considered here is a nonlinear variation of the Schr\"odinger equation and is one of the universal equations that describe the evolution of slowly varying wave packets in weakly nonlinear media with dispersion.
\subsubsection{PDE Formulation}
We consider the cubic Schr\"{o}dinger equation
\begin{equation}
i \psi_t + \psi_{xx} + \gamma |\psi|^2\psi =0,
\end{equation}
where $\psi$ is a complex-valued wave function and $i=\sqrt{-1}$ is the imaginary unit. Writing the complex-valued wave function in terms of its real and imaginary parts as $\psi=p + iq$, yields 
\begin{align*}  
p_t&=-q_{xx}-\gamma(q^2 + p^2)q,  \\
 \quad q_t&= p_{xx}+\gamma(q^2 + p^2)p. 
\end{align*}
The coupled PDEs admit a canonical Hamiltonian PDE form $y_t=J \frac{\delta \mathcal H}{\delta y}$ for $y=[p,q]^\top$ with the following space-time continuous Hamiltonian $\mathcal{H}$
\begin{equation*}  
 \mathcal{H}(q,p)=\int \frac{1}{2} \left[ p_x^2 + q_x^2 -\frac{\gamma}{2}[q^2 + p^2]^2 \right] \ \text{d}x .
\end{equation*}
In addition to the energy conservation, the NLSE also possesses quadratic mass and momentum invariants
\begin{equation*}  
 \mathcal{M}_1(q,p)=\int \left[ p^2 + q^2 \right] \ \text{d}x,  \quad \quad  \mathcal{M}_2(q,p)=\int \left[ p_xq - q_xp  \right] \ \text{d}x. 
\end{equation*}
\subsubsection{FOM Implementation}
We consider the nonlinear Schr\"odinger equation over $x\in [-L/2,L/2]$ with $L=2\sqrt{2}\pi$ and $\gamma=2$. The boundary conditions are periodic and the initial conditions are $p(x,0)=0.5 \left(1 + 0.01\cos(2\pi x/L) \right)$ and $q(x,0)=0$. The nonlinear PDE is spatially discretized using $n=64$ equally spaced grid points leading to a discretized state $\y \in \Real^{128}$. We employ finite difference for spatial discretization to obtain the FOM Hamiltonian
\begin{equation*}
    \dH( \y )=\frac{1}{2}\sum^n_{i=1} \left[ \left( \frac{q_{i+1}-q_i}{\Delta x} \right)^2 +\left( \frac{p_{i+1}-p_i}{\Delta x} \right)^2 - \frac{\gamma}{2}\left( q^2_{i} + p^2_{i}  \right)^2  \right] \ \Delta x,
\end{equation*}
where $p_i:=p(t,x_i)$, $q_i:=q(t,x_i)$, and $ \textbf{y}=[\p^\top \ \q^\top]^\top$. The FOM is represented by the following Hamiltonian ODE system
\begin{equation*} \resizebox{.9\hsize}{!}{$
    \dot{\p}=\nabla_{\q}\dH(\p,\q)
    =-\mathbf D_{\text{fd}}\q - \gamma \begin{bmatrix} \vdots \\  p_{i}^2q_{i} + q_{i}^3 \\ \vdots \end{bmatrix}, \quad
    \dot{\q}=-\nabla_{\p}\dH(\p,\q)
    =\mathbf D_{\text{fd}}\p + \gamma \begin{bmatrix} \vdots \\ p_{i}^3 + q_{i}^2p_{i} \\ \vdots \end{bmatrix}$}.
\end{equation*}
The space-discretized NLSE system conserves the following mass and momentum invariants 
\begin{equation*}
    M_{1,d}( \y )=\sum^n_{i=1} \left[  q^2_{i} + p^2_{i} \right] \ \Delta x, \quad \quad M_{2,d}( \y )=\sum^n_{i=1} \left[  \left(\frac{p_{i+1}-p_i}{\Delta x}\right) q_{i} -\left(\frac{q_{i+1}-q_i}{\Delta x}\right) p_{i} \right] \ \Delta x.
\end{equation*}
The FOM is numerically integrated using the symplectic midpoint rule with $\dt=0.005$. The resulting time-marching equations require solving a system of $2n=128$ coupled nonlinear equations at every time step. We build the snapshot matrix by collecting the snapshots for total time $T=20$. 
\subsubsection{Results}
In Figure \ref{fig:NLSE_state}, we compare the numerical performance of intrusive and nonintrusive Hamiltonian ROMs. The state error plots in Figure \ref{fig:NLSE_state_train} over the training time interval of $T=20$ show that the learned ROM performs similar to the intrusive Hamiltonian ROM up to $2r=10$. For $2r>10$, we observe the leveling-off for the learned ROM where the state accuracy does not improve with increasing reduced dimension. For the test data regime of $T=100$, both intrusive and nonintrusive Hamiltonian ROMs have the same state error up to $2r=10$ in Figure \ref{fig:NLSE_state_test}. For $2r>10$, intrusive Hamiltonian ROM exhibits significantly lower state error compared to nonintrusive Hamiltonian ROM. 
\par
The FOM energy error plots in Figure \ref{fig:NLSE_Hd} show that both intrusive and nonintrusive ROMs have similar bounded energy behavior. Unlike the exact energy conservation for the linear wave equation, the symplectic integrator for NLSE exhibits bounded energy error and hence, nonintrusive and intrusive Hamiltonian ROMs have the same level of FOM energy accuracy. Both Hamiltonian ROMs are simulated until $T=100$, which is $400\%$ longer than the training interval. Due to the nonlinear nature of the problem, both nonintrusive Hamiltonian ROMs in Figure \ref{fig:NLSE_Hr} also exhibit the same energy accuracy.
Given the coupled nature of the governing nonlinear PDE, the competitiveness of the learned ROM with the intrusive Hamiltonian ROM outside the training data is remarkable. 
\subsubsection{Conservation of Invariants for NLSE} 
In quantum mechanics, the quantity $|\psi(x,t)|^2$ represents the probability of finding the system in state $x$ at time $t$. We have compared $|\psi(x,t)|^2$ approximation from the nonintrusive Hamiltonian ROM with the FOM simulation in Figure \ref{fig:NLSE_psi}. The learned Hamiltonian ROM captures the correct distribution of $|\psi(x,t)|^2$, even $400\%$ outside the training time interval. In Figure \ref{fig:NLSE_inv}, we compare the conservation of mass and momentum invariants for the two Hamiltonian ROMs. Due to the specific choice of the reduced Hamiltonian, the intrusive Hamiltonian ROM preserves both quadratic invariants exactly. The learned Hamiltonian ROM exhibits bounded error for the mass invariant and interestingly, conserves the momentum invariant exactly. The numerical results in Figure \ref{fig:NLSE_psi} and Figure \ref{fig:NLSE_inv} demonstrate the physics-preserving nature of our H-OpInf method.  
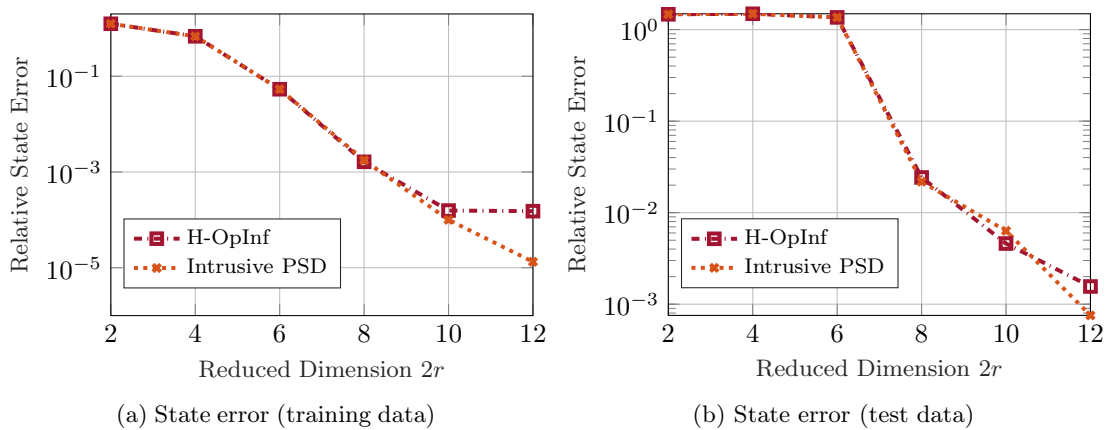
\begin{figure}
\captionsetup[subfigure]{oneside,margin={1.5cm,0 cm}}
\begin{subfigure}{.38\textwidth}
       \setlength\fheight{4 cm}
        \setlength\fwidth{\textwidth}
%
%
\definecolor{mycolor1}{rgb}{0.63529,0.07843,0.18431}%
\definecolor{mycolor2}{rgb}{0.85098,0.32549,0.09804}%
\begin{tikzpicture}

\begin{axis}[%
width=0.974\fwidth,
height=\fheight,
at={(0\fwidth,0\fheight)},
scale only axis,
xmin=2,
xmax=12,
xlabel style={font=\color{white!15!black}},
xlabel={\small Reduced Dimension $2r$},
ymode=log,
ymin=1e-06,
ymax=2,
yminorticks=true,
ylabel style={font=\color{white!15!black}},
ylabel={\small Relative State Error},
axis background/.style={fill=white},
xmajorgrids,
ymajorgrids,
legend style={at={(0.03,0.09)}, anchor=south west, legend cell align=left, align=left, draw=white!15!black, font=\footnotesize}
]
\addplot [color=mycolor1, dashdotted, line width=1.5pt, mark=square, mark options={solid, mycolor1}]
  table[row sep=crcr]{%
2	1.24803762268553\\
4	0.682682794903732\\
6	0.0535034873214138\\
8	0.0016321807456205\\
10	0.000156425485171693\\
12	0.000151930407446278\\
13	0.000151682543848979\\
};
\addlegendentry{H-OpInf}

\addplot [color=mycolor2, dotted, line width=1.5pt, mark=x, mark options={solid, mycolor2}]
  table[row sep=crcr]{%
2	1.24803019150904\\
4	0.682707354622605\\
6	0.053639421061906\\
8	0.00176182301549545\\
10	0.000100323512159385\\
12	1.32972126962278e-05\\
13	3.75677007849872e-06\\
};
\addlegendentry{Intrusive PSD}

\end{axis}

\begin{axis}[%
width=1.257\fwidth,
height=1.257\fheight,
at={(-0.163\fwidth,-0.163\fheight)},
scale only axis,
xmin=0,
xmax=1,
ymin=0,
ymax=1,
axis line style={draw=none},
ticks=none,
axis x line*=bottom,
axis y line*=left
]
\end{axis}
\end{tikzpicture}%
\caption{State error (training data)}
 \label{fig:NLSE_state_train}
    \end{subfigure}
    \hspace{1.4cm}
    \begin{subfigure}{.38\textwidth}
           \setlength\fheight{4 cm}
           \setlength\fwidth{\textwidth}
%
%
\definecolor{mycolor1}{rgb}{0.63529,0.07843,0.18431}%
\definecolor{mycolor2}{rgb}{0.85098,0.32549,0.09804}%
\begin{tikzpicture}

\begin{axis}[%
width=0.974\fwidth,
height=\fheight,
at={(0\fwidth,0\fheight)},
scale only axis,
xmin=2,
xmax=12,
xlabel style={font=\color{white!15!black}},
xlabel={\small Reduced Dimension $2r$},
ymode=log,
ymin=0.000753789267061471,
ymax=1.49086402645261,
yminorticks=true,
ylabel style={font=\color{white!15!black}},
ylabel={\small Relative State Error},
axis background/.style={fill=white},
xmajorgrids,
ymajorgrids,
legend style={at={(0.03,0.09)}, anchor=south west, legend cell align=left, align=left, draw=white!15!black, font=\footnotesize}
]
\addplot [color=mycolor1, dashdotted, line width=1.5pt, mark=square, mark options={solid, mycolor1}]
  table[row sep=crcr]{%
2	1.47173155547955\\
4	1.4908483224346\\
6	1.36158707636561\\
8	0.0242426328597051\\
10	0.00459976017194144\\
12	0.0015604354127782\\
13	0.00187272278450917\\
};
\addlegendentry{H-OpInf}

\addplot [color=mycolor2, dotted, line width=1.5pt, mark=x, mark options={solid, mycolor2}]
  table[row sep=crcr]{%
2	1.47172700611802\\
4	1.49086402645261\\
6	1.36168438597776\\
8	0.0217887032279236\\
10	0.00633106526552354\\
12	0.000753789267061471\\
12.3286308990219	0.000352883328161994\\
};
\addlegendentry{Intrusive PSD}

\end{axis}

\begin{axis}[%
width=1.257\fwidth,
height=1.257\fheight,
at={(-0.163\fwidth,-0.163\fheight)},
scale only axis,
xmin=0,
xmax=1,
ymin=0,
ymax=1,
axis line style={draw=none},
ticks=none,
axis x line*=bottom,
axis y line*=left
]
\end{axis}
\end{tikzpicture}%
\caption{State error (test data)}
 \label{fig:NLSE_state_test}
    \end{subfigure}
\caption{Nonlinear Schr\"odinger equation: The nonintrusive Hamiltonian ROM shows a similar behavior to the H-OpInf ROM for both training and test data in this example. For $2r>10$, we observe a leveling off of the state error in the training data regime for the nonintrusive Hamiltonian ROM.}
 \label{fig:NLSE_state}
\end{figure}
\begin{figure}
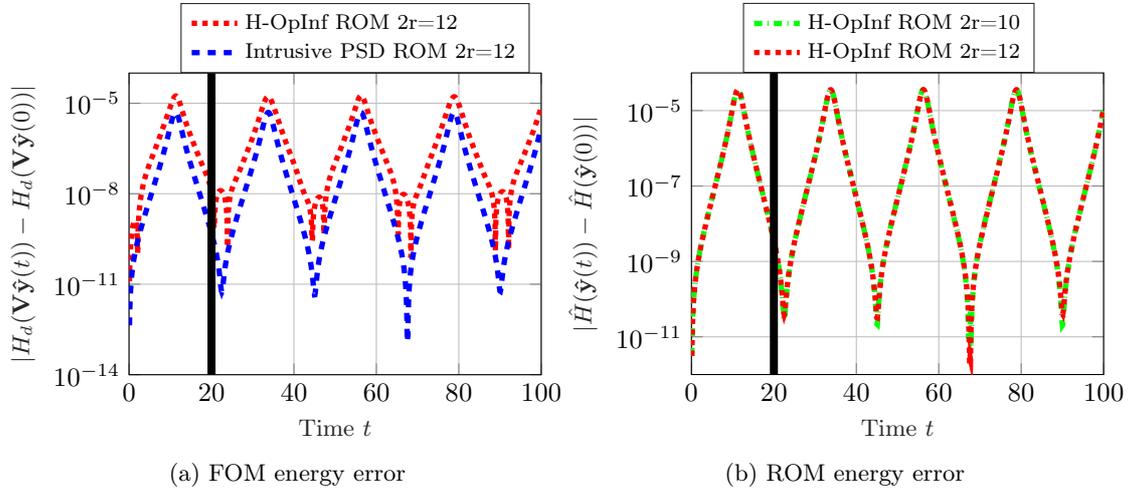

\captionsetup[subfigure]{oneside,margin={1.8cm,0 cm}}
\begin{subfigure}{.38\textwidth}
       \setlength\fheight{4 cm}
        \setlength\fwidth{\textwidth}
\input{Figures/figures/NLSE/NLSE_Hd.tex}
\caption{FOM energy error}
 \label{fig:NLSE_Hd}
    \end{subfigure}
    \hspace{1.4cm}
    \begin{subfigure}{.38\textwidth}
           \setlength\fheight{4 cm}
           \setlength\fwidth{\textwidth}
          
\input{Figures/figures/NLSE/NLSE_Hr.tex}
\caption{ROM energy error }
 \label{fig:NLSE_Hr}
    \end{subfigure}
\caption{Nonlinear Schr\"odinger equation: Plot (a) shows that both learned Hamiltonian ROM and intrusive Hamiltonian ROM exhibit bounded FOM energy error with marginally higher energy error for the nonintrusive Hamiltonian ROM. The black line indicates end of training time interval. Plot (b) shows similar energy error accuracy for nonintrusive Hamiltonian ROMs with $2r=10$ and $2r=12$.}
 \label{fig:NLSE_energy}
\end{figure}
\begin{figure}
\centering
\begin{subfigure}{.45\textwidth}
 \includegraphics[width=\textwidth,height=6cm]{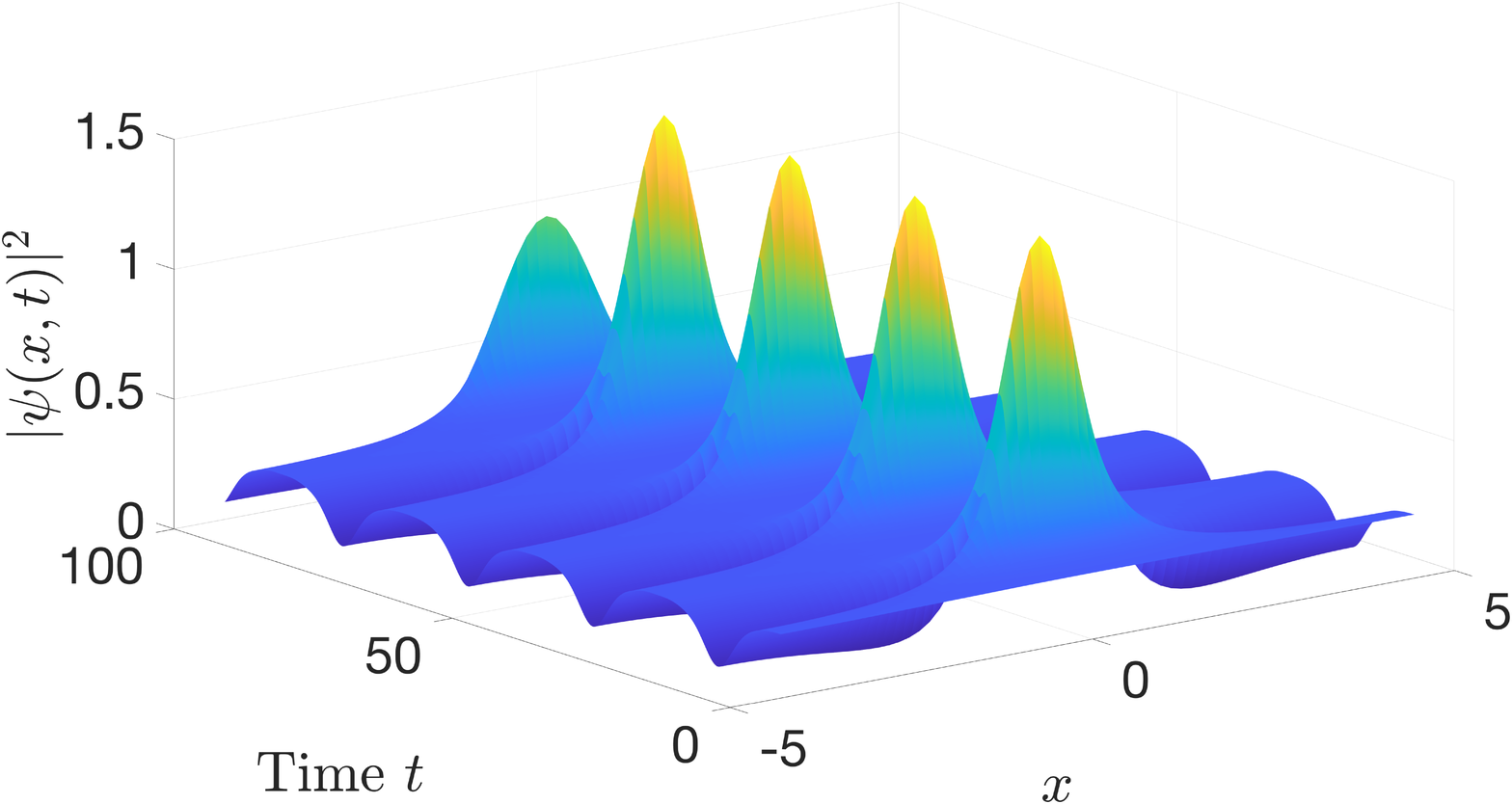}
\caption{FOM}
    \end{subfigure}
    \begin{subfigure}{.45\textwidth}
 \includegraphics[width=\textwidth,height=6cm]{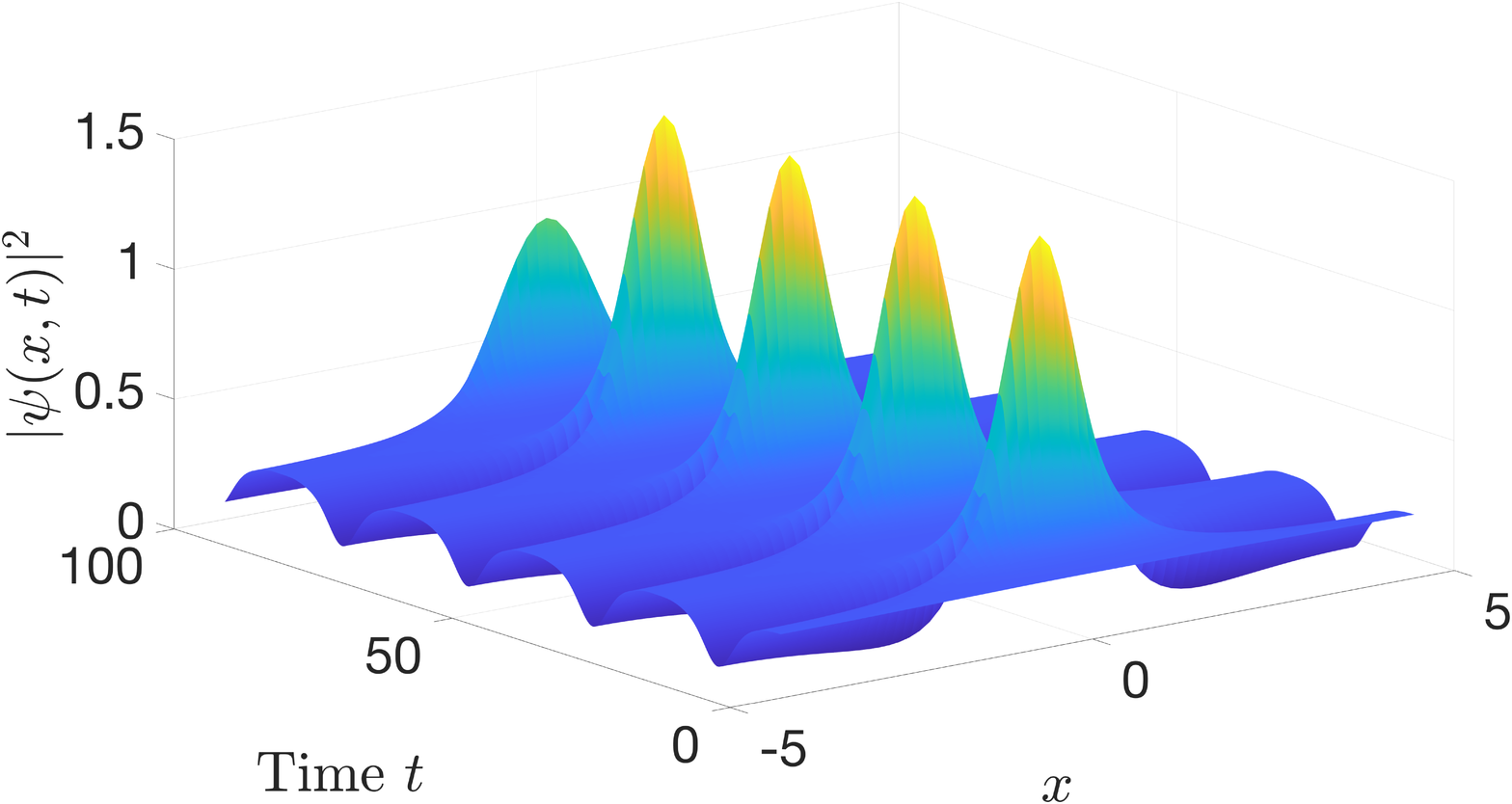}
\caption{H-OpInf ROM $2r=12$ }
    \end{subfigure}
\caption{Nonlinear Schr\"odinger equation:  Hamiltonian ROM learned using H-OpInf correctly predicts $|\psi(x,t)|^2$ well outside the training time interval of $T=20$. }
 \label{fig:NLSE_psi}
\end{figure}
\begin{figure}
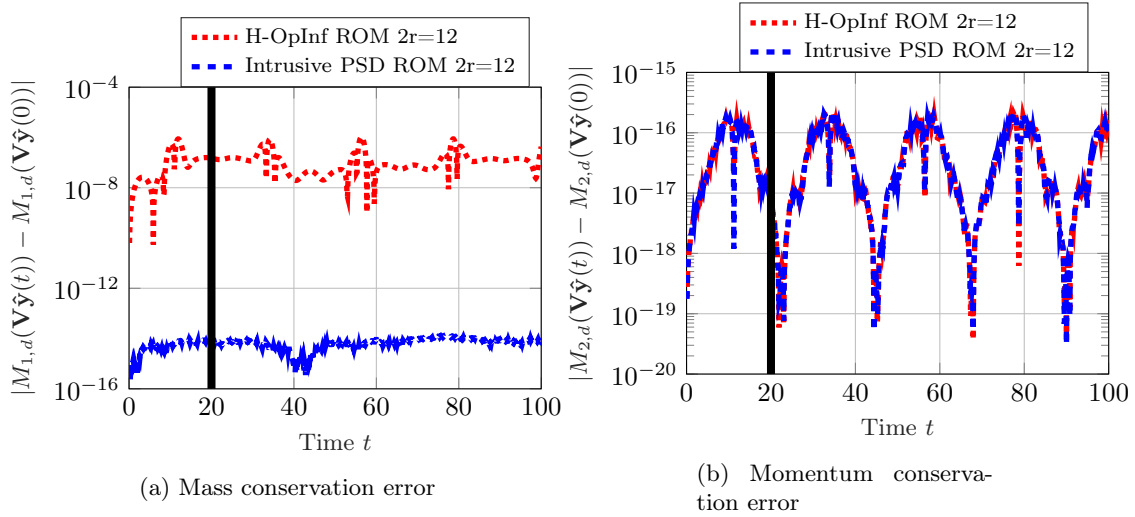

\captionsetup[subfigure]{oneside,margin={1.8cm,0 cm}}
\begin{subfigure}{.38\textwidth}
       \setlength\fheight{4 cm}
        \setlength\fwidth{\textwidth}
\input{Figures/figures/NLSE/NLSE_M1.tex}
\caption{Mass conservation error  }
 \label{fig:NLSE_M1}
    \end{subfigure}
    \hspace{1.4cm}
    \begin{subfigure}{.38\textwidth}
           \setlength\fheight{4 cm}
           \setlength\fwidth{\textwidth}
\input{Figures/figures/NLSE/NLSE_mom.tex}
\caption{Momentum conservation error}
 \label{fig:NLSE_M2}
    \end{subfigure}
\caption{Nonlinear Schr\"odinger equation:  The H-OpInf ROM exhibits bounded error for the mass invariant whereas the intrusive Hamiltonian ROM demonstrates exact conservation. In contrast, both nonintrusive and intrusive Hamiltonian ROMs conserve the momentum invariant exactly. The black line indicates end of training time interval.}
 \label{fig:NLSE_inv}
\end{figure}

\subsection{Sine-Gordon Equation}
\label{s:44} 
We consider a special nonlinear wave equation, called the sine-Gordon equation. This nonlinear hyperbolic PDE appears in a number of physical applications such as Josephson junctions between superconductors, dislocations in crystals, relativistic field theory, and mechanical transmission lines. Although this equation was originally introduced in the 19th century, it came to prominence in 1970s due to the presence of soliton solutions.  A soliton solution is a self-reinforcing wave that maintains its shape while it propagates at a constant velocity in the medium. These special wave forms are caused by a cancellation of nonlinear and dispersive effects in the medium.

\subsubsection{PDE Formulation}
The sine-Gordon equation
\begin{equation}
 \varphi_{tt} = \varphi_{xx} - \sin (\varphi),
 \label{eq:sg}
\end{equation}
has a canonical Hamiltonian formulation for $q=\varphi$ and $p=\varphi_t$. The Hamiltonian functional 
\begin{equation*}  
 \mathcal{H}(q,p)=\int \frac{1}{2} \left[ p^2 + q_x^2 +(1-\cos (q))\right] \ \text{d}x, 
\end{equation*}
with the canonical Hamiltonian PDE form $y_t=J \frac{\delta \mathcal H}{\delta y}$ for $y=[q \ p]^\top$ leads to
\begin{equation*}  y_t=
\begin{bmatrix}
q_t \\p_t
\end{bmatrix}= \begin{bmatrix}
0 & 1 \\-1 & 0
\end{bmatrix}\begin{bmatrix}
\frac{\delta \mathcal H }{\delta q} \\ \frac{\delta \mathcal H}{\delta p}
\end{bmatrix}
= \begin{bmatrix}
p \\
q_{xx} - \sin(q)
\end{bmatrix}.
\end{equation*}

\subsubsection{FOM Implementation}
We study the sine-Gordon equation over $x\in [-L/2,L/2]$ with $L=40$. For the FOM simulations, we consider periodic boundary conditions with the following initial conditions 
\begin{equation}
q(x,0)=0, \quad \quad p(x,0)=\frac{4}{\cosh(x)}.
\end{equation}
The nonlinear PDE is spatially discretized using $n=200$ equally spaced grid points leading to a discretized state $\y \in \Real^{400}$. We discretize the Hamiltonian functional which yields the following space-discretized Hamiltonian $\dH$
\begin{equation*}
\dH( \y)=\sum^n_{i=1} \left[\frac{1}{2} \left( \frac{q_{i+1}-q_i}{\Delta x} \right)^2 +\frac{p_i^2}{2} + (1-\cos(q_i))  \right] \ \Delta x,
\end{equation*}
where $q_i:=q(t,x_i)$, $p_i:=p(t,x_i)$, and $ \y=[\q^\top \ \p^\top]^\top$.
The resulting FOM is represented by the following Hamiltonian ODE system
\begin{equation*} 
    \dot{\q}=\nabla_{\p}\dH(\p,\q)
    =\p, \quad \quad
    \dot{\p}=-\nabla_{\q}\dH(\p,\q)
    =\mathbf D_{\text{fd}}\q - \begin{bmatrix} \vdots \\ \sin (q_i)\\ \vdots \end{bmatrix}.
\end{equation*}
The FOM is numerically integrated for total time $T=10$ using symplectic midpoint rule with $\dt=0.005$. The resulting time-marching equations require solving a system of $2n=400$ coupled nonlinear equations at every time step.

\subsubsection{Results}
Figure \ref{fig:SG_state} shows relative state approximation error for both Hamiltonian ROMs with increasing ROM order. For the training time interval of $T=10$ in Figure \ref{fig:SG_state_train}, both intrusive and nonintrusive approaches yield ROMs of comparative accuracy up to $2r=40$. For $2r>40$, the state error for nonintrusive Hamiltonian ROM levels off. Interestingly for the testing time interval of $T=50$ in Figure \ref{fig:SG_state_test}, the nonintrusive approach gives marginally lower state error for $2r>40$.
\par 
For the FOM energy error comparison, both Hamiltonian ROMs are simulated until $t=400$ to demonstrate the long-time stability of nonintrusive ROM simulations. Unlike the other two numerical examples, we have not considered a full cycle of FOM data for training data in this example. Inside the training data regime, both intrusive and nonintrusive Hamiltonian ROMs produce bounded energy error behavior with similar accuracy in Figure \ref{fig:SG_Hd}. Interestingly, the nonintrusive Hamiltonian FOM energy error plot changes its qualitative behavior after leaving the training data regime but the error still remains bounded up to $t=400$, which is $3900\%$ outside the training interval. Despite the fact that nonintrusive reduced operators are learned only from training data, the Hamiltonian nature of our learned ROM ensures accurate prediction along with bounded energy error far outside the training data regime. Similar to the NLSE example, both nonintrusive Hamiltonian ROMs for the sine-Gordon equation also exhibit the same ROM energy error behavior in Figure \ref{fig:SG_Hr}.
\par
We have compared the approximate numerical solution using both intrusive and nonintrusive approaches with the FOM solution in Figure \ref{fig:SG_traj}. Even though the reduced operators are learned from training data $T=10$, our nonintrusive Hamiltonian ROM captures the correct wave shape at $t=50$ which is $400\%$ past the training time interval. 
\begin{figure}
\captionsetup[subfigure]{oneside,margin={1.5cm,0 cm}}
\begin{subfigure}{.38\textwidth}
       \setlength\fheight{4 cm}
        \setlength\fwidth{\textwidth}
%
%
\definecolor{mycolor1}{rgb}{0.63529,0.07843,0.18431}%
\definecolor{mycolor2}{rgb}{0.85098,0.32549,0.09804}%
\begin{tikzpicture}

\begin{axis}[%
width=0.974\fwidth,
height=\fheight,
at={(0\fwidth,0\fheight)},
scale only axis,
xmin=0,
xmax=50,
xlabel style={font=\color{white!15!black}},
xlabel={\small Reduced Dimension $2r$},
ymode=log,
ymin=7.4632155616569e-07,
ymax=1.2142180904913,
yminorticks=true,
ylabel style={font=\color{white!15!black}},
ylabel={\small Relative State Error},
axis background/.style={fill=white},
xmajorgrids,
ymajorgrids,
legend style={at={(0.119,0.735)}, anchor=south west, legend cell align=left, align=left, draw=white!15!black,font=\footnotesize}
]
\addplot [color=mycolor1, dashdotted, line width=1.5pt, mark=square, mark options={solid, mycolor1}]
  table[row sep=crcr]{%
2	1.21421809049131\\
4	0.067704312755749\\
6	0.00934690714697071\\
8	0.00287219777231627\\
10	0.00131262872099116\\
12	0.00135170885971506\\
14	0.0011490279219393\\
16	0.0011106508193067\\
18	0.000549900438108041\\
20	0.000360199156085349\\
22	0.000263209187077887\\
24	0.000236241321261613\\
26	0.000173437801307061\\
28	0.000139130295575117\\
30	9.71307543549341e-05\\
32	5.78866446175898e-05\\
34	3.90074249855312e-05\\
36	2.3538775737937e-05\\
38	1.52680717795679e-05\\
40	1.03084850799476e-05\\
42	7.24351429497894e-06\\
44	5.70496754186663e-06\\
46	4.97643786531497e-06\\
48	4.68237648605448e-06\\
50	4.57764661953644e-06\\
};
\addlegendentry{H-OpInf}

\addplot [color=mycolor2, dotted, line width=1.5pt, mark=x, mark options={solid, mycolor2}]
  table[row sep=crcr]{%
2	1.21421798799001\\
4	0.0676998094709061\\
6	0.00934474937172934\\
8	0.00287141800071053\\
10	0.00131231667958498\\
12	0.00135134031603344\\
14	0.00114874931293398\\
16	0.0011103624883497\\
18	0.000549740820173118\\
20	0.000360058926231183\\
22	0.000263065266981341\\
24	0.000236090410887765\\
26	0.000173286136274763\\
28	0.000138989280413002\\
30	9.70014445402571e-05\\
32	5.76810583052985e-05\\
34	3.87321147925901e-05\\
36	2.3095925614194e-05\\
38	1.45845609461908e-05\\
40	9.27218981496078e-06\\
42	5.67070300945859e-06\\
44	3.50725240375607e-06\\
46	2.12537290239771e-06\\
48	1.27081058362761e-06\\
50	7.46321556165683e-07\\
};
\addlegendentry{Intrusive PSD}

\end{axis}

\begin{axis}[%
width=1.257\fwidth,
height=1.257\fheight,
at={(-0.163\fwidth,-0.163\fheight)},
scale only axis,
xmin=0,
xmax=1,
ymin=0,
ymax=1,
axis line style={draw=none},
ticks=none,
axis x line*=bottom,
axis y line*=left
]
\end{axis}
\end{tikzpicture}%
\caption{State error (training data)}
 \label{fig:SG_state_train}
    \end{subfigure}
    \hspace{1.4cm}
    \begin{subfigure}{.38\textwidth}
           \setlength\fheight{4 cm}
           \setlength\fwidth{\textwidth}
%
%
\definecolor{mycolor1}{rgb}{0.63529,0.07843,0.18431}%
\definecolor{mycolor2}{rgb}{0.85098,0.32549,0.09804}%
\begin{tikzpicture}

\begin{axis}[%
width=0.974\fwidth,
height=\fheight,
at={(0\fwidth,0\fheight)},
scale only axis,
xmin=0,
xmax=50,
xlabel style={font=\color{white!15!black}},
xlabel={\small Reduced Dimension $2r$},
ymode=log,
ymin=0.001,
ymax=1.2840164638034,
yminorticks=true,
ylabel style={font=\color{white!15!black}},
ylabel={\small Relative State Error},
axis background/.style={fill=white},
xmajorgrids,
ymajorgrids,
legend style={at={(0.072,0.231)}, anchor=south west, legend cell align=left, align=left, draw=white!15!black,font=\footnotesize}
]
\addplot [color=mycolor1, dashdotted, line width=1.5pt, mark=square, mark options={solid, mycolor1}]
  table[row sep=crcr]{%
2	1.05471126970089\\
4	1.20150770882426\\
6	1.28400999028025\\
8	1.18979661325111\\
10	1.04082823183867\\
12	1.01765716915248\\
14	0.986222092810862\\
16	0.98515378192601\\
18	0.833090981226942\\
20	0.822737245354815\\
22	0.802343051631789\\
24	0.689755239166287\\
26	0.432716197540841\\
28	0.431111505008694\\
30	0.147695819307703\\
32	0.115234094884691\\
34	0.115818043788679\\
36	0.0692152397608271\\
38	0.0286465239497672\\
40	0.00822524606924847\\
42	0.00785302980640215\\
44	0.00600354644610799\\
46	0.00229598968131591\\
48	0.00215048873894658\\
50	0.0020217899073396\\
};
\addlegendentry{H-OpInf}

\addplot [color=mycolor2, dotted, line width=1.5pt, mark=x, mark options={solid, mycolor2}]
  table[row sep=crcr]{%
2	1.05471116555946\\
4	1.20151838744295\\
6	1.2840164638034\\
8	1.18981811892657\\
10	1.04086890494515\\
12	1.0177001314027\\
14	0.986267687744134\\
16	0.985199720962448\\
18	0.833168659280712\\
20	0.822815990136388\\
22	0.802426479604634\\
24	0.689872687482139\\
26	0.432984016034449\\
28	0.431381350969144\\
30	0.147912900008361\\
32	0.115466832905022\\
34	0.116054668067982\\
36	0.0695757998634243\\
38	0.0291256862908698\\
40	0.00872609609615491\\
42	0.00835806361842711\\
44	0.00652265664827633\\
46	0.00266013571968163\\
48	0.00251310077918586\\
50	0.00232442262829325\\
};
\addlegendentry{Intrusive PSD}

\end{axis}

\begin{axis}[%
width=1.257\fwidth,
height=1.257\fheight,
at={(-0.163\fwidth,-0.163\fheight)},
scale only axis,
xmin=0,
xmax=1,
ymin=0,
ymax=1,
axis line style={draw=none},
ticks=none,
axis x line*=bottom,
axis y line*=left
]
\end{axis}
\end{tikzpicture}%
\caption{State error (test data)}
 \label{fig:SG_state_test}
    \end{subfigure}
\caption{Sine-Gordon equation: Hamiltonian ROMs learned with H-OpInf achieve same accuracy as the intrusive Hamiltonian ROM for both training data and testing data, which is $400\%$ longer than the training time interval.}
 \label{fig:SG_state}
\end{figure}
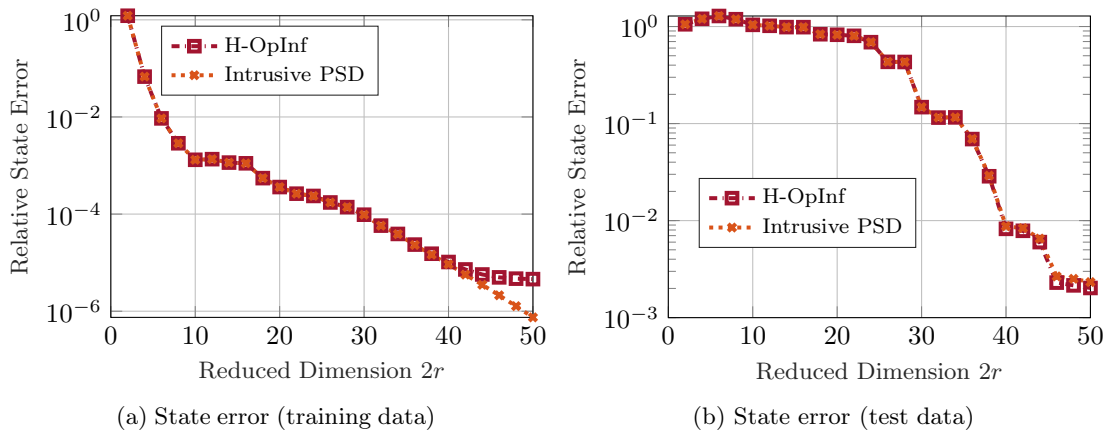
\begin{figure}
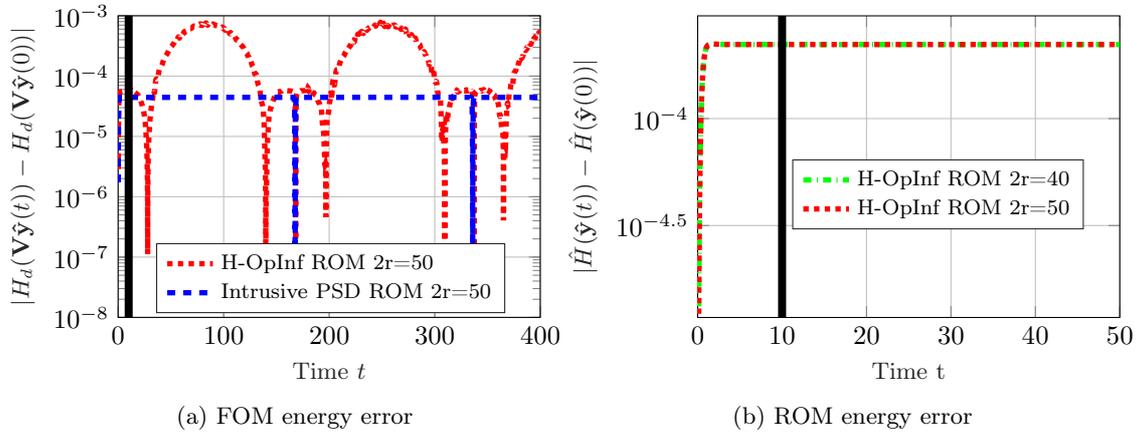

\captionsetup[subfigure]{oneside,margin={2cm,0 cm}}
\begin{subfigure}{.38\textwidth}
       \setlength\fheight{4 cm}
        \setlength\fwidth{\textwidth}
\input{Figures/figures/SG/SG_hfom.tex}
\caption{FOM energy error}
 \label{fig:SG_Hd}
    \end{subfigure}
    \hspace{1.4cm}
    \begin{subfigure}{.38\textwidth}
           \setlength\fheight{4 cm}
           \setlength\fwidth{\textwidth}
\input{Figures/figures/SG/SG_hrr.tex}
\caption{ROM energy error}
\label{fig:SG_Hr}
    \end{subfigure}
\caption{Sine-Gordon equation: Plot (a) shows that the learned Hamiltonian ROM exhibits bounded FOM energy error even at $t=400$. The black line indicates end of training time interval. Plot (b) shows similar ROM energy error behavior for learned Hamiltonian ROMs of different dimensions.}
 \label{fig:SG_energy}
\end{figure}
\begin{figure}
\captionsetup[subfigure]{oneside,margin={1.7cm,0 cm}}
\begin{subfigure}{.25\textwidth}
       \setlength\fheight{3 cm}
        \setlength\fwidth{\textwidth}
\raisebox{-51mm}{
%
%
\definecolor{mycolor1}{rgb}{0.00000,0.44700,0.74100}%
\definecolor{mycolor2}{rgb}{0.85000,0.32500,0.09800}%
\definecolor{mycolor3}{rgb}{0.92900,0.69400,0.12500}%
\begin{tikzpicture}

\begin{axis}[%
width=0.974\fwidth,
height=\fheight,
at={(0\fwidth,0\fheight)},
scale only axis,
xmin=-20,
xmax=20,
xlabel style={font=\color{white!15!black}},
xlabel={\small $x$},
ymin=0,
ymax=6,
ylabel style={font=\color{white!15!black}},
ylabel={\small $\varphi(x)$},
axis background/.style={fill=white},
xmajorgrids,
ymajorgrids,
legend style={at={(0.125,1)}, anchor=south west, legend cell align=left, align=left, draw=white!15!black,font=\footnotesize}
]
\addplot [color=mycolor1,line width=1.5pt]
  table[row sep=crcr]{%
-19.8	1.70753208745504e-07\\
-19.6	1.74093381720475e-07\\
-19.4	1.84354816427135e-07\\
-19.2	2.01847600682546e-07\\
-19	2.27349446642935e-07\\
-18.8	2.61830364363674e-07\\
-18.6	3.06666153096185e-07\\
-18.4	3.63706843178761e-07\\
-18.2	4.35112077786805e-07\\
-18	5.23887628601711e-07\\
-17.8	6.33451375274367e-07\\
-17.6	7.68228972010326e-07\\
-17.4	9.33696586534874e-07\\
-17.2	1.13632199685254e-06\\
-17	1.3843226185978e-06\\
-16.8	1.68770814247533e-06\\
-16.6	2.058479160577e-06\\
-16.4	2.51154672176379e-06\\
-16.2	3.06525825744291e-06\\
-16	3.74177336184423e-06\\
-15.8	4.56809798817862e-06\\
-15.6	5.5774800503072e-06\\
-15.4	6.81070398388833e-06\\
-15.2	8.31748647038344e-06\\
-15	1.01583552448861e-05\\
-14.8	1.24071286436688e-05\\
-14.6	1.5153969075563e-05\\
-14.4	1.85090511479778e-05\\
-14.2	2.26069903861104e-05\\
-14	2.76122358397449e-05\\
-13.8	3.37256597971321e-05\\
-13.6	4.11926135639444e-05\\
-13.4	5.03127716669973e-05\\
-13.2	6.14521578991412e-05\\
-13	7.50578348343516e-05\\
-12.8	9.16758459114083e-05\\
-12.6	0.000111973130002518\\
-12.4	0.000136764287911195\\
-12.2	0.000167044274985601\\
-12	0.000204028331848709\\
-11.8	0.000249200755685943\\
-11.6	0.000304374469236058\\
-11.4	0.000371763777802284\\
-11.2	0.000454073233557058\\
-11	0.000554606172263255\\
-10.8	0.000677397275978329\\
-10.6	0.000827374477587912\\
-10.4	0.00101055669702356\\
-10.2	0.00123429533054637\\
-10	0.00150756915856381\\
-9.8	0.00184134445970277\\
-9.6	0.00224901469649805\\
-9.4	0.0027469372600908\\
-9.2	0.00335508852697368\\
-9	0.004097862994216\\
-8.8	0.00500504761778624\\
-8.6	0.00611300874736747\\
-8.4	0.00746613622075927\\
-8.2	0.00911859709131081\\
-8	0.0111364596866393\\
-7.8	0.0136002563956781\\
-7.6	0.0166080593851688\\
-7.4	0.0202791456117553\\
-7.2	0.0247583249286874\\
-7	0.0302210005538736\\
-6.8	0.0368790389270604\\
-6.6	0.0449875862824489\\
-6.4	0.0548531684908256\\
-6.2	0.0668438922640929\\
-6	0.0814034863993909\\
-5.8	0.0990722720958023\\
-5.6	0.120519380261133\\
-5.4	0.146590125768218\\
-5.2	0.178368099118613\\
-5	0.21724199278012\\
-4.8	0.264955983528912\\
-4.6	0.323620488071168\\
-4.4	0.395679706600481\\
-4.2	0.483870213863625\\
-4	0.591227407954922\\
-3.8	0.72115911432281\\
-3.6	0.877508874660244\\
-3.4	1.06445192264611\\
-3.2	1.28608587282712\\
-3	1.54567239518629\\
-2.8	1.84456121006169\\
-2.6	2.18089921427123\\
-2.4	2.5484427767868\\
-2.2	2.93612186336968\\
-2	3.32903827016143\\
-1.8	3.71094534015714\\
-1.6	4.06728948103745\\
-1.4	4.3874807181004\\
-1.2	4.66562296346613\\
-1	4.89989296704102\\
-0.799999999999997	5.09126960194055\\
-0.599999999999998	5.24223558902565\\
-0.399999999999999	5.35576180751971\\
-0.199999999999999	5.43462969169474\\
0	5.48102834403995\\
0.199999999999999	5.49633796367476\\
0.399999999999999	5.48102834403995\\
0.599999999999998	5.43462969169474\\
0.799999999999997	5.35576180751971\\
1	5.24223558902565\\
1.2	5.09126960194055\\
1.4	4.89989296704102\\
1.6	4.66562296346613\\
1.8	4.3874807181004\\
2	4.06728948103744\\
2.2	3.71094534015714\\
2.4	3.32903827016142\\
2.6	2.93612186336968\\
2.8	2.54844277678681\\
3	2.18089921427123\\
3.2	1.84456121006169\\
3.4	1.54567239518629\\
3.6	1.28608587282712\\
3.8	1.06445192264611\\
4	0.877508874660242\\
4.2	0.721159114322808\\
4.4	0.591227407954921\\
4.6	0.483870213863624\\
4.8	0.395679706600481\\
5	0.323620488071168\\
5.2	0.264955983528912\\
5.4	0.217241992780121\\
5.6	0.178368099118614\\
5.8	0.146590125768218\\
6	0.120519380261134\\
6.2	0.0990722720958025\\
6.4	0.0814034863993911\\
6.6	0.066843892264093\\
6.8	0.0548531684908257\\
7	0.0449875862824489\\
7.2	0.0368790389270604\\
7.4	0.0302210005538736\\
7.6	0.0247583249286874\\
7.8	0.0202791456117553\\
8	0.0166080593851688\\
8.2	0.0136002563956781\\
8.4	0.0111364596866393\\
8.6	0.00911859709131077\\
8.8	0.00746613622075927\\
9	0.00611300874736748\\
9.2	0.00500504761778623\\
9.4	0.00409786299421599\\
9.6	0.00335508852697367\\
9.8	0.0027469372600908\\
10	0.00224901469649805\\
10.2	0.00184134445970277\\
10.4	0.00150756915856381\\
10.6	0.00123429533054637\\
10.8	0.00101055669702356\\
11	0.000827374477587913\\
11.2	0.000677397275978331\\
11.4	0.000554606172263253\\
11.6	0.000454073233557056\\
11.8	0.000371763777802284\\
12	0.000304374469236058\\
12.2	0.000249200755685943\\
12.4	0.000204028331848709\\
12.6	0.000167044274985601\\
12.8	0.000136764287911194\\
13	0.000111973130002518\\
13.2	9.16758459114081e-05\\
13.4	7.50578348343517e-05\\
13.6	6.14521578991413e-05\\
13.8	5.03127716669973e-05\\
14	4.11926135639445e-05\\
14.2	3.37256597971322e-05\\
14.4	2.7612235839745e-05\\
14.6	2.26069903861104e-05\\
14.8	1.85090511479778e-05\\
15	1.5153969075563e-05\\
15.2	1.24071286436688e-05\\
15.4	1.01583552448861e-05\\
15.6	8.31748647038343e-06\\
15.8	6.81070398388832e-06\\
16	5.5774800503072e-06\\
16.2	4.56809798817863e-06\\
16.4	3.74177336184423e-06\\
16.6	3.0652582574429e-06\\
16.8	2.51154672176378e-06\\
17	2.05847916057699e-06\\
17.2	1.68770814247534e-06\\
17.4	1.3843226185978e-06\\
17.6	1.13632199685254e-06\\
17.8	9.33696586534878e-07\\
18	7.68228972010327e-07\\
18.2	6.33451375274366e-07\\
18.4	5.23887628601709e-07\\
18.6	4.35112077786803e-07\\
18.8	3.6370684317876e-07\\
19	3.06666153096185e-07\\
19.2	2.61830364363675e-07\\
19.4	2.27349446642935e-07\\
19.6	2.01847600682546e-07\\
19.8	1.84354816427135e-07\\
20	1.74093381720475e-07\\
};

\addplot [color=OliveGreen, only marks, line width=1.5pt, mark size=2pt,mark=o, mark options={solid, OliveGreen}]
  table[row sep=crcr]{%
-19.8	1.70742790072206e-07\\
-17.8	6.33445294079258e-07\\
-15.8	4.56809766669333e-06\\
-13.8	3.37256581985701e-05\\
-11.8	0.000249206635121189\\
-9.8	0.0018412942370206\\
-7.8	0.0136002507304948\\
-5.8	0.0990721214846562\\
-3.8	0.721158932212248\\
-1.8	3.71094545379253\\
0.199999999999999	5.49633779062962\\
2.2	3.71094545379253\\
4.2	0.721158932212245\\
6.2	0.0990721214846565\\
8.2	0.0136002507304948\\
10.2	0.0018412942370206\\
12.2	0.000249206635121189\\
14.2	3.37256581985702e-05\\
16.2	4.56809766669334e-06\\
18.2	6.33445294102007e-07\\
};

\addplot [color=magenta, only marks,line width=1.5pt, mark size=3pt, mark=x, mark options={solid, magenta}]
  table[row sep=crcr]{%
-19.8	1.70743588712914e-07\\
-16.8	1.68768238225752e-06\\
-13.8	3.37255220404135e-05\\
-10.8	0.000677385447597146\\
-7.8	0.0136002385302641\\
-4.8	0.264954229862906\\
-1.8	3.71093386729692\\
1.2	5.09126096940336\\
4.2	0.721155389184079\\
7.2	0.0368789858879742\\
10.2	0.0018412754781499\\
13.2	9.16759934800141e-05\\
16.2	4.56808062827489e-06\\
19.2	2.61827666873045e-07\\
};

\end{axis}

\begin{axis}[%
width=1.257\fwidth,
height=1.257\fheight,
at={(-0.163\fwidth,-0.163\fheight)},
scale only axis,
xmin=0,
xmax=1,
ymin=0,
ymax=1,
axis line style={draw=none},
ticks=none,
axis x line*=bottom,
axis y line*=left
]
\end{axis}
\end{tikzpicture}
\subcaption{$t=5$}
    \end{subfigure}
    \hspace{.8cm}
    \begin{subfigure}{.25\textwidth}
           \setlength\fheight{3 cm}
           \setlength\fwidth{\textwidth}
%
%
\definecolor{mycolor1}{rgb}{0.00000,0.44700,0.74100}%
\definecolor{mycolor2}{rgb}{0.85000,0.32500,0.09800}%
\definecolor{mycolor3}{rgb}{0.92900,0.69400,0.12500}%
\begin{tikzpicture}

\begin{axis}[%
width=0.974\fwidth,
height=\fheight,
at={(0\fwidth,0\fheight)},
scale only axis,
xmin=-20,
xmax=20,
xlabel style={font=\color{white!15!black}},
xlabel={\small $x$},
ymin=-1,
ymax=7,
ylabel style={font=\color{white!15!black}},
ylabel={\small $\varphi(x)$},
axis background/.style={fill=white},
xmajorgrids,
ymajorgrids,
legend style={at={(-0.125,1)}, anchor=south west, legend cell align=left, align=left, draw=white!15!black,font=\footnotesize}
]
\addplot [color=mycolor1,line width=1.5pt]
  table[row sep=crcr]{%
-19.8	-0.00323910529384879\\
-19.6	-0.0031702059776167\\
-19.4	-0.00294159250474462\\
-19.2	-0.00252238863851161\\
-19	-0.00192810657941119\\
-18.8	-0.00121665432312324\\
-18.6	-0.000426340393514913\\
-18.4	0.000458824101624434\\
-18.2	0.00145956828510256\\
-18	0.00251601232509763\\
-17.8	0.00346140457495827\\
-17.6	0.00408574789140677\\
-17.4	0.00423596784460692\\
-17.2	0.00388640088455403\\
-17	0.00315135417435002\\
-16.8	0.00223983017279993\\
-16.6	0.00137068982724359\\
-16.4	0.000691998773082665\\
-16.2	0.000249893077042307\\
-16	9.76753782084734e-06\\
-15.8	-0.000103860778994342\\
-15.6	-0.000172248761922204\\
-15.4	-0.000255348366713753\\
-15.2	-0.000379215451895256\\
-15	-0.000542920638435058\\
-14.8	-0.000732267169240544\\
-14.6	-0.000927678879530299\\
-14.4	-0.00110997529579728\\
-14.2	-0.00126624831277172\\
-14	-0.00138979309805457\\
-13.8	-0.00147668161416411\\
-13.6	-0.00152556205640948\\
-13.4	-0.00153722526174\\
-13.2	-0.00151187079306181\\
-13	-0.00144895729410873\\
-12.8	-0.00134815757670074\\
-12.6	-0.00120798025273974\\
-12.4	-0.00102554735485453\\
-12.2	-0.000797563182507789\\
-12	-0.000519279917292474\\
-11.8	-0.000184014953346825\\
-11.6	0.000216276714593358\\
-11.4	0.000692061036744088\\
-11.2	0.00125687473062062\\
-11	0.00192723831327041\\
-10.8	0.00272397210995032\\
-10.6	0.00367308017594523\\
-10.4	0.00480636222655143\\
-10.2	0.00616327670965007\\
-10	0.00779236357353073\\
-9.8	0.00975298969666313\\
-9.6	0.0121181972119743\\
-9.4	0.0149773156207253\\
-9.2	0.018439589921711\\
-9	0.0226388073881353\\
-8.8	0.0277383191857163\\
-8.6	0.0339378091802928\\
-8.4	0.0414812485032922\\
-8.2	0.0506664510587968\\
-8	0.0618571707551973\\
-7.8	0.075497316799274\\
-7.6	0.0921285356239492\\
-7.4	0.112411492686997\\
-7.2	0.137151297450447\\
-7	0.16732835790199\\
-6.8	0.204134680908963\\
-6.6	0.249016518662813\\
-6.4	0.303723083207673\\
-6.2	0.370359810195845\\
-6	0.451442891977579\\
-5.8	0.549946628543931\\
-5.6	0.669328531239194\\
-5.4	0.813504282346997\\
-5.2	0.986726670145177\\
-5	1.19329974070512\\
-4.8	1.43703977231023\\
-4.6	1.72040479423681\\
-4.4	2.04329933645865\\
-4.2	2.40177468910018\\
-4	2.78715911377525\\
-3.8	3.18634209176476\\
-3.6	3.58362390906713\\
-3.4	3.96366031499554\\
-3.2	4.31424841586852\\
-3	4.627819804219\\
-2.8	4.90140020989993\\
-2.6	5.13557741481546\\
-2.4	5.33318926774846\\
-2.2	5.4981925352415\\
-2	5.63487152548093\\
-1.8	5.74736361735484\\
-1.6	5.83941642922848\\
-1.4	5.91429130218909\\
-1.2	5.97474975156477\\
-1	6.02308138726547\\
-0.799999999999997	6.06114894973793\\
-0.599999999999998	6.09043673758778\\
-0.399999999999999	6.112095533214\\
-0.199999999999999	6.1269806409093\\
0	6.13568175207698\\
0.199999999999999	6.13854421888854\\
0.399999999999999	6.13568175207698\\
0.599999999999998	6.1269806409093\\
0.799999999999997	6.112095533214\\
1	6.09043673758778\\
1.2	6.06114894973793\\
1.4	6.02308138726547\\
1.6	5.97474975156476\\
1.8	5.91429130218909\\
2	5.83941642922848\\
2.2	5.74736361735483\\
2.4	5.63487152548094\\
2.6	5.4981925352415\\
2.8	5.33318926774845\\
3	5.13557741481545\\
3.2	4.90140020989991\\
3.4	4.62781980421899\\
3.6	4.31424841586851\\
3.8	3.96366031499552\\
4	3.5836239090671\\
4.2	3.18634209176475\\
4.4	2.78715911377524\\
4.6	2.40177468910017\\
4.8	2.04329933645864\\
5	1.72040479423681\\
5.2	1.43703977231022\\
5.4	1.19329974070511\\
5.6	0.986726670145172\\
5.8	0.813504282346991\\
6	0.669328531239186\\
6.2	0.54994662854393\\
6.4	0.451442891977584\\
6.6	0.370359810195848\\
6.8	0.303723083207675\\
7	0.249016518662802\\
7.2	0.204134680908955\\
7.4	0.167328357902005\\
7.6	0.137151297450446\\
7.8	0.112411492686984\\
8	0.0921285356239502\\
8.2	0.0754973167992724\\
8.4	0.0618571707552013\\
8.6	0.0506664510588051\\
8.8	0.0414812485032899\\
9	0.0339378091802868\\
9.2	0.0277383191857161\\
9.4	0.0226388073881379\\
9.6	0.018439589921707\\
9.8	0.0149773156207208\\
10	0.012118197211969\\
10.2	0.00975298969665298\\
10.4	0.00779236357352889\\
10.6	0.00616327670965384\\
10.8	0.00480636222655385\\
11	0.0036730801759532\\
11.2	0.00272397210995947\\
11.4	0.00192723831327387\\
11.6	0.00125687473061928\\
11.8	0.000692061036743197\\
12	0.000216276714597377\\
12.2	-0.000184014953344122\\
12.4	-0.000519279917292006\\
12.6	-0.000797563182501592\\
12.8	-0.00102554735484979\\
13	-0.00120798025274787\\
13.2	-0.00134815757671349\\
13.4	-0.00144895729411381\\
13.6	-0.00151187079306201\\
13.8	-0.00153722526174123\\
14	-0.00152556205641006\\
14.2	-0.00147668161416322\\
14.4	-0.00138979309805562\\
14.6	-0.00126624831277433\\
14.8	-0.00110997529579646\\
15	-0.00092767887952601\\
15.2	-0.000732267169239371\\
15.4	-0.000542920638439288\\
15.6	-0.000379215451898943\\
15.8	-0.000255348366712739\\
16	-0.000172248761919957\\
16.2	-0.000103860778994952\\
16.4	9.76753781983323e-06\\
16.6	0.000249893077046666\\
16.8	0.000691998773093052\\
17	0.00137068982725319\\
17.2	0.00223983017280158\\
17.4	0.00315135417434416\\
17.6	0.0038864008845471\\
17.8	0.00423596784460329\\
18	0.00408574789140572\\
18.2	0.00346140457495796\\
18.4	0.0025160123250978\\
18.6	0.00145956828510282\\
18.8	0.000458824101623423\\
19	-0.000426340393517491\\
19.2	-0.00121665432312539\\
19.4	-0.00192810657941064\\
19.6	-0.00252238863850823\\
19.8	-0.00294159250474037\\
20	-0.0031702059776139\\
};
\addlegendentry{FOM}

\addplot [color=OliveGreen, only marks, line width=1.5pt, mark size=2pt,mark=o, mark options={solid, OliveGreen}]
  table[row sep=crcr]{%
-19.8	1.09454920250097e-06\\
-17.8	3.49571390404372e-06\\
-15.8	2.36026221444817e-05\\
-13.8	0.00017400672156897\\
-11.8	0.000897853805377603\\
-9.8	0.0061647866292322\\
-7.8	0.0759432652372956\\
-5.8	0.551428561551002\\
-3.8	3.1844950306646\\
-1.8	5.74859284303496\\
0.199999999999999	6.14090176709627\\
2.2	5.74859284303578\\
4.2	3.18449503066508\\
6.2	0.551428561551035\\
8.2	0.0759432652372931\\
10.2	0.00616478662923254\\
12.2	0.000897853805378052\\
14.2	0.000174006721568991\\
16.2	2.36026221444864e-05\\
18.2	3.49571390790414e-06\\
};
\addlegendentry{Intrusive PSD ROM 2r=50}

\addplot [color=magenta, only marks,line width=1.5pt, mark size=3pt, mark=x, mark options={solid, magenta}]
  table[row sep=crcr]{%
-19.8	1.09356116054735e-06\\
-16.8	8.51494015833499e-06\\
-13.8	0.000173906383638138\\
-10.8	0.000606367250067395\\
-7.8	0.0759356308656905\\
-4.8	1.43750134443467\\
-1.8	5.74824455570553\\
1.2	6.06142314728673\\
4.2	3.18455775495952\\
7.2	0.205433435257801\\
10.2	0.00593856924411699\\
13.2	0.000451417002910482\\
16.2	2.35811847171332e-05\\
19.2	1.55689548161383e-06\\
};
\addlegendentry{H-OpInf ROM 2r=50}

\end{axis}
\end{tikzpicture}%
\subcaption{$t=25$}
    \end{subfigure}
    \hspace{.8cm}
        \begin{subfigure}{.25\textwidth}
           \setlength\fheight{3 cm}
           \setlength\fwidth{\textwidth}
\raisebox{-51mm}{
%
%
\definecolor{mycolor1}{rgb}{0.00000,0.44700,0.74100}%
\definecolor{mycolor2}{rgb}{0.85000,0.32500,0.09800}%
\definecolor{mycolor3}{rgb}{0.92900,0.69400,0.12500}%
\begin{tikzpicture}

\begin{axis}[%
width=0.974\fwidth,
height=\fheight,
at={(0\fwidth,0\fheight)},
scale only axis,
xmin=-20,
xmax=20,
xlabel style={font=\color{white!15!black}},
xlabel={\small $x$},
ymin=-1,
ymax=7,
ylabel style={font=\color{white!15!black}},
ylabel={\small $\varphi(x)$},
axis background/.style={fill=white},
xmajorgrids,
ymajorgrids,
legend style={at={(0.125,1)}, anchor=south west, legend cell align=left, align=left, draw=white!15!black,font=\footnotesize}
]
\addplot [color=mycolor1,line width=1.5pt]
  table[row sep=crcr]{%
-19.8	-0.00165536331981922\\
-19.6	-0.00164902868838103\\
-19.4	-0.00163022520715252\\
-19.2	-0.00159908962526114\\
-19	-0.00155584296575048\\
-18.8	-0.00150105298758812\\
-18.6	-0.00143513693313169\\
-18.4	-0.00135873953116342\\
-18.2	-0.0012727185151947\\
-18	-0.0011777800733211\\
-17.8	-0.00107492777584711\\
-17.6	-0.000965232351822879\\
-17.4	-0.000849660001894636\\
-17.2	-0.000729450929945928\\
-17	-0.000605814001082605\\
-16.8	-0.000479869441452028\\
-16.6	-0.000352937214114831\\
-16.4	-0.000226255683764191\\
-16.2	-0.00010089869039601\\
-16	2.19118179305353e-05\\
-15.8	0.000141108955256335\\
-15.6	0.000255946956353476\\
-15.4	0.000365537693567874\\
-15.2	0.000469324950874439\\
-15	0.000567245502813087\\
-14.8	0.000659127805931408\\
-14.6	0.000745413116532946\\
-14.4	0.000827209337904145\\
-14.2	0.000905692892206006\\
-14	0.000982966239973399\\
-13.8	0.00106209271295507\\
-13.6	0.0011465709092857\\
-13.4	0.00124116305203549\\
-13.2	0.00135219573905364\\
-13	0.00148699391431375\\
-12.8	0.00165468465930827\\
-12.6	0.00186700221799081\\
-12.4	0.00213762113977289\\
-12.2	0.00248305451779483\\
-12	0.00292405014900385\\
-11.8	0.00348513024815982\\
-11.6	0.0041955560581957\\
-11.4	0.00509148989828097\\
-11.2	0.00621638365347517\\
-11	0.00762195228323158\\
-10.8	0.00937150398153406\\
-10.6	0.0115420205267063\\
-10.4	0.0142256753379807\\
-10.2	0.0175347439962698\\
-10	0.0216066497249734\\
-9.8	0.0266075798420703\\
-9.6	0.0327394005865445\\
-9.4	0.0402492963337323\\
-9.2	0.0494378988382979\\
-9	0.0606693280859482\\
-8.8	0.0743869146158208\\
-8.6	0.0911298877916469\\
-8.4	0.111551234080324\\
-8.2	0.136443997650077\\
-8	0.166775719119393\\
-7.8	0.203726519758962\\
-7.6	0.248735358600837\\
-7.4	0.30355779887011\\
-7.2	0.370324838522379\\
-7	0.451592813575349\\
-6.8	0.550380915178283\\
-6.6	0.670181020790942\\
-6.4	0.814907421598943\\
-6.2	0.9887511295477\\
-6	1.19589153592744\\
-5.8	1.4399849673014\\
-5.6	1.72334533009277\\
-5.4	2.04581700595455\\
-5.2	2.40353856340058\\
-5	2.78808170461768\\
-4.8	3.1866520229053\\
-4.6	3.58379654076675\\
-4.4	3.96423987307571\\
-4.2	4.31566325357786\\
-4	4.6302864945292\\
-3.8	4.90495131386337\\
-3.6	5.14017485634973\\
-3.4	5.33883816536122\\
-3.2	5.50498620581662\\
-3	5.64296818658334\\
-2.8	5.75695415105117\\
-2.6	5.85073440315998\\
-2.4	5.92766427768708\\
-2.2	5.99065508503627\\
-2	6.04217611796919\\
-1.8	6.08426906063967\\
-1.6	6.1185843851988\\
-1.4	6.14644497733406\\
-1.2	6.16892605799016\\
-1	6.18691856438123\\
-0.799999999999997	6.20114842346642\\
-0.599999999999998	6.21216637636094\\
-0.399999999999999	6.22035679758509\\
-0.199999999999999	6.22598766473813\\
0	6.22926461576766\\
0.199999999999999	6.23033764696073\\
0.399999999999999	6.22926461576767\\
0.599999999999998	6.22598766473815\\
0.799999999999997	6.22035679758509\\
1	6.21216637636095\\
1.2	6.20114842346641\\
1.4	6.18691856438123\\
1.6	6.16892605799018\\
1.8	6.14644497733403\\
2	6.1185843851988\\
2.2	6.08426906063965\\
2.4	6.04217611796921\\
2.6	5.99065508503625\\
2.8	5.92766427768709\\
3	5.85073440315997\\
3.2	5.75695415105119\\
3.4	5.64296818658331\\
3.6	5.50498620581661\\
3.8	5.33883816536121\\
4	5.14017485634971\\
4.2	4.90495131386334\\
4.4	4.63028649452918\\
4.6	4.31566325357782\\
4.8	3.96423987307567\\
5	3.58379654076673\\
5.2	3.1866520229053\\
5.4	2.78808170461765\\
5.6	2.40353856340057\\
5.8	2.04581700595451\\
6	1.72334533009275\\
6.2	1.43998496730139\\
6.4	1.19589153592742\\
6.6	0.988751129547713\\
6.8	0.814907421598943\\
7	0.67018102079093\\
7.2	0.550380915178271\\
7.4	0.451592813575354\\
7.6	0.37032483852236\\
7.8	0.303557798870109\\
8	0.248735358600845\\
8.2	0.203726519758959\\
8.4	0.166775719119411\\
8.6	0.136443997650078\\
8.8	0.111551234080315\\
9	0.0911298877916391\\
9.2	0.0743869146158108\\
9.4	0.0606693280859316\\
9.6	0.0494378988382881\\
9.8	0.0402492963337239\\
10	0.0327394005865431\\
10.2	0.0266075798420849\\
10.4	0.0216066497249716\\
10.6	0.0175347439962628\\
10.8	0.0142256753379758\\
11	0.0115420205267127\\
11.2	0.00937150398154131\\
11.4	0.00762195228322014\\
11.6	0.00621638365348012\\
11.8	0.00509148989829034\\
12	0.00419555605819097\\
12.2	0.00348513024814717\\
12.4	0.00292405014899412\\
12.6	0.00248305451780117\\
12.8	0.00213762113978095\\
13	0.0018670022180038\\
13.2	0.00165468465931374\\
13.4	0.00148699391430207\\
13.6	0.00135219573905753\\
13.8	0.00124116305204373\\
14	0.00114657090927348\\
14.2	0.00106209271295189\\
14.4	0.00098296623998944\\
14.6	0.000905692892218253\\
14.8	0.000827209337914731\\
15	0.000745413116536008\\
15.2	0.000659127805923094\\
15.4	0.000567245502808214\\
15.6	0.000469324950877682\\
15.8	0.000365537693582893\\
16	0.000255946956358104\\
16.2	0.000141108955246509\\
16.4	2.19118179301561e-05\\
16.6	-0.000100898690404277\\
16.8	-0.000226255683772003\\
17	-0.000352937214103135\\
17.2	-0.000479869441449329\\
17.4	-0.000605814001091448\\
17.6	-0.000729450929947111\\
17.8	-0.000849660001891513\\
18	-0.000965232351833939\\
18.2	-0.00107492777586598\\
18.4	-0.00117778007331535\\
18.6	-0.00127271851518419\\
18.8	-0.00135873953117156\\
19	-0.00143513693313013\\
19.2	-0.00150105298758574\\
19.4	-0.00155584296576614\\
19.6	-0.00159908962527108\\
19.8	-0.00163022520715947\\
20	-0.00164902868839655\\
};

\addplot [color=OliveGreen, only marks, line width=1.5pt, mark size=2pt,mark=o, mark options={solid, OliveGreen}]
  table[row sep=crcr]{%
-19.8	3.11592461593548e-06\\
-17.8	8.2669857117928e-06\\
-15.8	4.93271096781639e-05\\
-13.8	0.000364227495978092\\
-11.8	0.00107280755960499\\
-9.8	0.00597910897746273\\
-7.8	0.192005301023815\\
-5.8	1.40538899429557\\
-3.8	4.87767048610249\\
-1.8	6.0765581329859\\
0.199999999999999	6.22841468771902\\
2.2	6.07655813298987\\
4.2	4.87767048610421\\
6.2	1.4053889942958\\
8.2	0.192005301023795\\
10.2	0.00597910897746051\\
12.2	0.00107280755960678\\
14.2	0.000364227495978186\\
16.2	4.93271096782045e-05\\
18.2	8.26698571276825e-06\\
};

\addplot [color=magenta, only marks,line width=1.5pt, mark size=3pt, mark=x, mark options={solid, magenta}]
  table[row sep=crcr]{%
-19.8	3.13201077093607e-06\\
-16.8	1.81916504660277e-05\\
-13.8	0.000365481187745102\\
-10.8	-0.0049824160229118\\
-7.8	0.19332280593339\\
-4.8	3.15516205338505\\
-1.8	6.07816776148356\\
1.2	6.19637036470351\\
4.2	4.88571562596532\\
7.2	0.535787001097117\\
10.2	0.00513095449253071\\
13.2	0.000903673299683858\\
16.2	4.947044364196e-05\\
19.2	4.13035979465337e-06\\
};

\end{axis}
\end{tikzpicture}
\subcaption{$t=50$}
    \end{subfigure}
    \caption{Sine-Gordon equation: Plots show the numerical approximation of the solution of \eqref{eq:sg} using low-dimensional ($2r=50$) intrusive and nonintrusive Hamiltonian ROM at different $t$ values.}
\label{fig:SG_traj}
\end{figure}
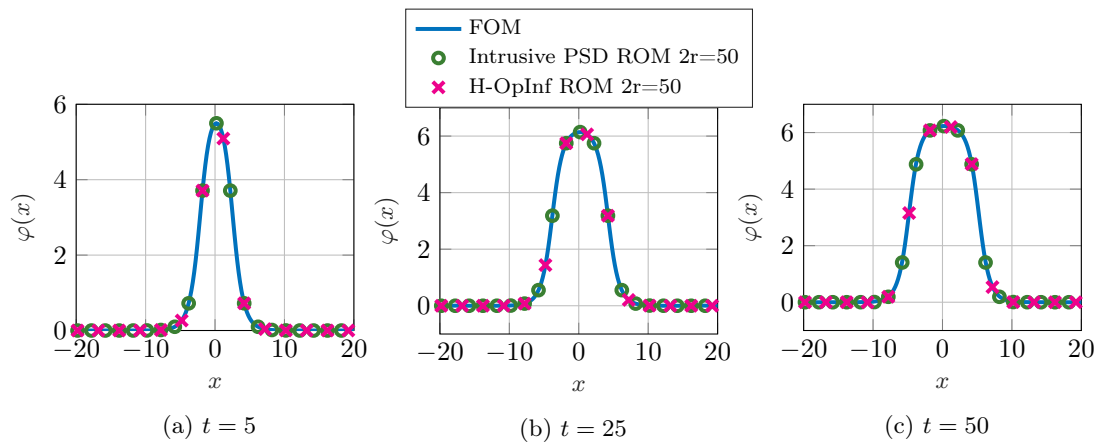
%
\section{Conclusions} 
\label{s:5} 
We have presented a data-driven model reduction method that utilizes information about the space-time continuous Hamiltonian functional to derive Hamiltonian ROMs via nonintrusive operator inference. Our method applies to canonical Hamiltonian systems with nonpolynomial nonlinear terms, and learns Hamiltonian reduced operators directly from the full-model simulation data, \textit{without} an intrusive symplectic projection step that requires full-model operators. Our method only requires access to the \textit{form} of the space-discretized Hamiltonian so that we can learn the parameters, but not the space-discretized Hamiltonian itself.  The inference of the operators is based on a constrained least-squares problem that ensures that the reduced models are Hamiltonian systems. We have also presented a theoretical result that shows that the nonintrusively learned reduced operators converge to the same reduced operators as obtained with intrusive structure-preserving model reduction under certain not-too-restrictive conditions.
\par
The numerical experiments with the nonlinear Schr\"odinger equation and the sine-Gordon equation demonstrate that our method works well for Hamiltonian systems with complex nonlinear phenomena. The numerical results also show that the presented method learns stable reduced-order models and provides greater interpretability, while facilitating accurate long-time predictions far outside the training data regime.
\par 
Future research directions motivated by this work are: extending Hamiltonian operator inference to noncanonical Hamiltonian systems; deriving error bounds for the difference between Hamiltonian of intrusive and nonintrusive ROM, i.e., $|\tilde{H}-\hat{H}|$; and extending sampling algorithm based on re-projection for implicit time-marching schemes so that they can be combined with H-OpInf to recover intrusive Hamiltonian ROMs in a nonintrusive way.

\paragraph{Acknowledgments:} The authors would like to thank Silke Glas and Patrick Buchfink for valuable feedback, as well as the anonymous reviewers for their comments which helped improve the paper. Z.W. was partially supported by the National Science Foundation under grant DMS-1913073. B.K. was partially supported by the National Science Foundation under grant PHY-2028125.

\bibliographystyle{vancouver}
\bibliography{main}
\end{document}